\def\comp{{\rm comp}}
\def\sat{{\rm sat}}
\def\nor{{\rm nor}}
\def\out{{\rm out}}
\def\inn{{\rm in}}
\def\Bsch{{\rm Bsch}}
\def\hatBVar{{\widehat{\rm BVar}}}
\def\smal{{\rm small}}
\def\ssing{{\rm ssing}}
\def\sreg{{\rm sreg}}
\def\BVar{{\rm BVar}}
\def\princ{{\rm princ}}
\def\emb{{\rm emb}}
\def\cosupp{{\rm cosupp}}
\def\hatVar{{\mathrm {\wh{Var}}}}
\def\sqord{\coprod_{ord}}
\begin{document}

\title[{Functorial desingularization over $\bfQ$: the embedded case}]{Functorial desingularization over $\bfQ$: boundaries and the embedded case}
\author{Michael Temkin}
\thanks{I am grateful to E. Bierstone and P. Milman for helpful discussions, to V. Berkovich for encouragement and showing his work \cite{berform}, and to the anonymous referee for pointing out numerous typos. This work was supported by the Israel Science Foundation (grant No. 1159/15). The first version was written when the author stayed at the Institute for Advanced Study and was supported by NSF grant DMS-0635607.}
\address{Einstein Institute of Mathematics, The Hebrew University of Jerusalem, Giv'at Ram, Jerusalem, 91904, Israel}
\email{temkin@math.huji.ac.il}

\keywords{Resolution, singularities, quasi-excellent, functorial embedded desingularization, $B$-schemes,
schemes with boundaries}

\maketitle

\begin{abstract}
Our main result establishes functorial desingularization of noetherian quasi-excellent schemes over $\bfQ$ with ordered boundaries. A functorial embedded desingularization of quasi-excellent schemes of characteristic zero is deduced. Furthermore, a standard simple argument extends these results to other categories, including in particular, (equivariant) embedded desingularization of the following objects of characteristic zero: qe algebraic stacks, qe formal schemes, complex and non-archimedean analytic spaces. We also obtain a semistable reduction theorem for formal schemes.
\end{abstract}

\section{Introduction}
Very often one divides various desingularization problems into two large classes: non-embedded desingularization and embedded desingularization. A typical example of a problem of the first type is to associate to a scheme $X$ a blow up sequence $f\:X'\longto X$ with a regular $X'$ and such that $f$ is an isomorphism over the regular locus of $X$. A typical (but rather crude) example of a problem of the second type is to associate to a regular ambient scheme $X$ with a divisor $Z\into X$ a blow up sequence $f\:X'\longto X$ such that $f^{-1}(Z)$ is an snc divisor and $f$ blows up only regular subschemes in the preimage of $Z$. In particular, $X'$ is regular and $f$ is an isomorphism over $X\setminus Z$. Although there are much finer versions of embedded desingularization, it seems that the one we have mentioned covers most of the applications of embedded desingularization.

This work is a direct continuation of \cite{nemb}, where functorial non-embedded desingularization of varieties of characteristic zero was used to prove an analogous result for all quasi-excellent schemes of characteristic zero. Our aim is to extend the technique developed in \cite{nemb} to functorial embedded desingularization of qe schemes of characteristic zero. In particular, we establish the above version of embedded desingularization for such schemes. Note that in view of \cite[$\rm IV_2$, \S7.9]{ega}, this is the most general class of schemes over $\bfQ$ for which the problem can be solved. Moreover, we solve a finer problem formulated in Theorem \ref{embth}, though when compared with the case of varieties, this is still far from the strongest known version. In particular, we do not achieve principalization and our algorithm does not choose centers that have simple normal crossings with exceptional divisors.

As a simple corollary of functorial embedded desingularization we deduce analogous embedded desingularization results for (formal) qe stacks, and complex/non-archimedean analytic spaces. Also, one obtains equivariant embedded desingularization of all these objects with respect to an action of a regular group.

\subsection{Main results}\label{mainsec} Now, we are going to formulate our main results. We try to make the formulations as self-contained as possible, though certain referencing to the terminology introduced later is still involved. Mainly, one has to use the notions of principal and complete transforms of the boundary in order to formulate the sharpest results. Our results are concerned with desingularization of the following objects: a divisor on a regular scheme, a scheme with a boundary, a scheme embedded into a regular scheme with an snc boundary. Although using the non-embedded desingularization from \cite{nemb} all three results can be easily obtained one from another, we decided to formulate them all since each of them has its own flavor. The first and the third cases are the classical embedded desingularization problems. The second formulation is important for us because the entire paper is written in the language of $B$-schemes, i.e. schemes with boundaries. The reasons for choosing this language is discussed in \S\ref{bsubsec} and Appendix \ref{bmotsec}.

\subsubsection{Desingularization of divisors}
In many applications of embedded desingularization one wants to resolve a divisor $E$ (or a function) on a regular ambient variety $X$ by finding a modification $f\:X'\to X$ with a regular $X'$ and such that the reduction of $E\times_XX'$ is snc (i.e. strictly normal crossings). One can also achieve that $f$ modifies only the non-regular locus of $E$, but the problem of preserving the entire locus where $E$ is snc is more delicate, and, probably, is not the ``correct problem" (see \S\ref{strsec}). In fact, the problem that makes much more sense is to find a desingularization that preserves the locus where $E$ is snc and the splitting to components is fixed in some sense. A standard way to formulate this is to consider a {\em divisorial boundary} $E=\{E_1\. E_n\}$ where each $E_i$ is a divisor.

\begin{rem}
(i) We make our life easier by considering only {\em ordered} boundaries, as otherwise only the new boundary would be ordered accordingly to the history, and this would require to use a heavier terminology, as one does in \cite{CJS}. In particular, we will use the order whenever this shortens our arguments. As a drawback, our desingularization procedure depends on the order of the components and is only compatible with regular morphisms that preserve the order.

(ii) At least in the case of varieties one can functorially desingularize unordered boundaries so that the entire snc locus is preserved, see Remark \ref{strrem}(iii). So, almost surely all our results have ``unordered" analogs, where all initial boundaries are unordered.
\end{rem}

We refer to \S\ref{bschsec} for the definitions of snc and strictly monomial boundaries, and to \S\ref{blsec} for the definition of complete and principal transforms of a boundary under a blow up sequence. For the reader's convenience basic properties of the transforms are collected in Lemma \ref{sumlem}. Recall that if $E=\{E_1\.E_n\}$ is a boundary then $|E|=\cup_{i=1}^n|E_i|$ denotes its support and $[E]=\sum_{i=1}^nE_i$ denotes its scheme-theoretic support.

\begin{theor}\label{divth}
For any quasi-excellent noetherian regular scheme $X$ of characteristic zero and a divisorial boundary $E$ on $X$ there exists a blow up sequence $f=\calF_\div(X,E)\:X'\longto X$ such that

(i) the centers of $f$ are regular (and so $X'$ is regular) and contained in the preimage of the set $T$ consisting of the points $x\in E$ at which $E$ is not snc,

(ii) the complete transform $f^\circ(E)$ is snc; in particular, the strict transform $f^!(|E|)$ is snc and the total transform $[E]\times_XX'$ is strictly monomial,

(iii) $\calF_\div$ is functorial with respect to strict regular morphisms (cf. \S\ref{Bmorsec}); that is, given a regular morphism $g\:Y\to X$ and $D=E\times_XY$, the blow up sequence $\calF_\div(Y,D)$ is obtained from $g^*(\calF_\div(X,E))$ by omitting all empty blow ups.
\end{theor}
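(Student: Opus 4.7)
The plan is to reduce Theorem~\ref{divth} to the functorial desingularization of noetherian quasi-excellent $B$-schemes of characteristic zero announced in the abstract. The pair $(X, E)$ of a regular scheme and a divisorial boundary is, by construction, a $B$-scheme, and the natural notion of regularity for $B$-schemes developed in the earlier sections asks precisely that the underlying scheme be regular and the boundary be snc. Consequently, the theorem splits cleanly: invoke the $B$-scheme desingularization to make $(X, E)$ regular in this stronger sense, and verify that when the ambient scheme is already regular the procedure only modifies the non-snc locus of $E$.

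First I would apply the functorial $B$-scheme desingularization, call it $\calF_{\Bsch}$, to $(X, E)$, obtaining a blow up sequence $f : X' \to X$ with regular centers and output the pair $(X', f^\circ(E))$, where $f^\circ(E)$ is by definition the complete transform of the boundary. By the very meaning of $B$-regularity, $X'$ is regular and $f^\circ(E)$ is snc, which is~(ii); the assertions about the strict transform $f^!(E)$ and the strictly monomial total transform then follow from Lemma~\ref{sumlem}. The regularity of the centers is a part of~(i). The containment of the centers in the preimage of the non-snc locus of $E$ rests on the following observation: when $X$ is regular, the $B$-singular locus of $(X, E)$, which governs where $\calF_{\Bsch}$ is permitted to blow up, coincides with the non-snc locus of $E$. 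This is a local check, easily made via the interpretation of $B$-schemes as log-schemes whose stalks of monoids are free, which is carried out in an earlier section.

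For~(iii), functoriality under exact regular morphisms is inherited directly from $\calF_{\Bsch}$. Given an exact regular $g : Y \to X$, the pullback $D = E \times_X Y$ is again a divisorial boundary in the ordered sense, and the compatibility of $\calF_{\Bsch}$ with such base change is precisely the statement that $\calF_{\Bsch}(Y, D)$ equals $g^*(\calF_{\Bsch}(X, E))$ after discarding empty blow ups. The commutation of the complete and principal transforms with exact regular pullbacks, built into the formalism of boundaries, makes this a direct transcription of $B$-scheme functoriality.

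The main obstacle is not really in this theorem itself, but in the underlying $B$-scheme desingularization theorem: one must ensure that the numerical invariant driving $\calF_{\Bsch}$ is constructed so as to vanish precisely on the $B$-regular locus, and hence to leave the snc part of $E$ untouched whenever $X$ is already regular. Granting that, Theorem~\ref{divth} reduces to a transcription of the $B$-scheme result into the language of divisors on an ambient regular scheme, together with the local comparison between $B$-singularity and failure of snc.
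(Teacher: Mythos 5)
Your reduction is exactly the paper's: Theorem \ref{divth} is deduced as a special case of the $B$-scheme desingularization Theorem \ref{Bth}, using that for regular $X$ with divisorial boundary $E$ the semi-regular locus of $(X,E)$ is precisely the snc locus of $E$, and that semi-regularity of the output forces the (necessarily divisorial) complete transform to be snc. The only cosmetic difference is that the paper checks this comparison of loci directly from the definition of semi-regularity rather than through the log-scheme interpretation, but the argument is the same.
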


\begin{rem}
(i) Unlike the known algorithms for varieties, we do not achieve that the centers of $f$ are transversal to the new boundaries.

(ii) The algorithm only modifies the bad locus $T$, but its blow ups $X_{i+1}\to X_i$ can modify the intermediate good loci $X_i\setminus T_i$ where $E_i$ is snc. More generally, in all desingularization algorithms in this paper, we require that the centers lie over the bad locus of the original scheme $X$ (with an additional datum), but they can intersect the intermediate good loci where $X_i$ is already resolved.
\end{rem}

\subsubsection{Desingularization of $B$-schemes}
One does not have to require that $X$ is regular in the above theorem. Also, it is convenient to allow non-divisorial boundaries $B=\{B_1\. B_n\}$, where each component $B_i$ is only a locally principal closed subscheme. A pair $(X,B)$ will be called a {\em $B$-scheme}, and we will work within the framework of $B$-schemes in this paper. In particular, the version of the main theorem we will deal with in the paper is given below. In this theorem, a $B$-scheme $(X',B')$ is said to be {\em semi-regular} if $X'$ is regular and $B'$ becomes snc after removing connected components of $X'$ from the boundary components $B'_i\in B'$.

\begin{theor}\label{Bth}
For any quasi-excellent noetherian $B$-scheme $(X,B)$ of characteristic zero there exists a blow up sequence $f=\calF(X,B)\:X'\longto X$ such that

(i) $(X',B')$ is semi-regular where $B'=f^\circ(B)$,

(ii) each center of $f$ is regular and disjoint from the preimage of the semi-regular locus of $(X,B)$,

(iii) $\calF$ is functorial with respect to strict regular morphisms; that is, given a regular morphism $g\:Y\to X$ with $D=B\times_XY$, the blow up sequence $\calF(Y,D)$ is obtained from $g^*(\calF(X,B))$ by omitting all empty blow
ups.
\end{theor}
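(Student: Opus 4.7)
The plan is to derive Theorem \ref{Bth} by combining the functorial non-embedded desingularization of quasi-excellent noetherian schemes of characteristic zero from \cite{nonemb} with Theorem \ref{divth} (the divisor case), in a two-step procedure that first resolves the underlying scheme and then resolves the transformed boundary.

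First I would forget the boundary and apply the main theorem of \cite{nonemb} to $X$ to obtain a functorial blow up sequence $f_1 : X_1 \to X$ with $X_1$ regular, whose centers are regular and contained in the preimage of the singular locus $X_{\sing}$. Since $X$ is generically reduced, its generic points lie in the regular locus, so $f_1$ is an isomorphism at these points and $X_1$ is generically reduced as well. Transport the boundary along $f_1$ by setting $B_1 = f_1^\circ(B)$; this is again an ordered boundary on $X_1$ by the properties of complete transforms recorded in Lemma \ref{sumlem}.

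Next, working on the regular scheme $X_1$, decompose each component $B_{1,i}$ as $Z_i \sqcup D_i$, where $Z_i$ is the union of those connected components of $X_1$ on which the local equation of $B_{1,i}$ vanishes identically, and $D_i$ is an effective Cartier divisor on the remaining components. Since $X_1$ is regular, the collection $D = (D_1, \ldots, D_n)$ is a divisorial boundary on $X_1$. Apply Theorem \ref{divth} to $(X_1, D)$ to obtain a functorial blow up sequence $f_2 : X_2 \to X_1$ with regular centers over the non-snc locus of $D$ and with $f_2^\circ(D)$ snc on $X_2$. Because each $Z_i$ is a union of connected components of $X_1$, its scheme-theoretic preimage under $f_2$ is a union of connected components of $X_2$; hence setting $B_2 = (f_2 \circ f_1)^\circ(B) = f_2^\circ(B_1)$, removing the full-component pieces from the $B_{2,i}$'s yields exactly $f_2^\circ(D)$, which is snc. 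This gives semi-regularity of $(X_2, B_2)$ and establishes (i) with $\calF(X, B) := f_2 \circ f_1$.

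Part (ii) follows because the semi-regular locus of $(X, B)$ lies in $X_{\reg}$ intersected with the snc locus of $B$: over $X_{\reg}$ the map $f_1$ is an isomorphism by functoriality of \cite{nonemb} applied to the open immersion $X_{\reg} \hookrightarrow X$, and over the snc locus of $B|_{X_{\reg}}$ the map $f_2$ is an isomorphism by Theorem \ref{divth}(i), so no center of $\calF(X, B)$ meets the preimage of the semi-regular locus. Part (iii) is obtained by concatenating the two functoriality statements for an exact regular morphism $g : Y \to X$: the pull back $g^*(f_1)$ with empty blow ups removed reproduces the first stage of $\calF(Y, B \times_X Y)$, and then $g^*(f_2)$ similarly reproduces the second stage, using that exact regular pull back preserves the order of the boundary and commutes both with the complete transform and with the $Z_i \sqcup D_i$ decomposition. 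The main obstacle is precisely this last compatibility: verifying that the ``full-component versus divisorial'' split is intrinsic enough to be preserved by arbitrary exact regular morphisms, and that no unexpected non-snc interactions appear between the two parts during Step 2. These are essentially formal consequences of the basic properties of boundaries collected in Lemma \ref{sumlem}, so once the bookkeeping is in place the result follows by stringing together the two cited theorems.
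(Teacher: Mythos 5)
Your reduction is essentially the content of Remark \ref{brem}(i) of the paper: given the non-embedded desingularization of \cite{nonemb}, Theorems \ref{divth} and \ref{Bth} are equivalent, via exactly the two-step procedure you describe (resolve $X$, transport $B$ by the complete transform, strip off the full-component pieces, resolve the remaining divisorial boundary). The bookkeeping you flag as the ``main obstacle'' --- that the split of $f_1^\circ(B)$ into full components and Cartier divisors on the regular scheme $X_1$ is preserved by exact regular pullback --- is indeed unproblematic (regular morphisms are flat, so a local equation pulls back to zero iff it is zero), and your verification of (ii) is fine.

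The genuine gap is logical rather than technical: within this paper Theorem \ref{divth} has no independent proof --- it is deduced in \S\ref{proofsec} as a \emph{particular case} of Theorem \ref{Bth}. So taking \ref{divth} as an input makes your argument circular. All of the actual difficulty of \ref{Bth} lives inside what you are citing: for a general quasi-excellent regular noetherian scheme one cannot invoke the variety algorithms directly, and the paper's proof of \ref{Bth} is the four-stage bootstrap of \S\ref{dessec} --- desingularization of $B$-varieties (Theorem \ref{bvarth}, where one must carefully separate the old boundary from the bad locus before resolving it as a marked ideal), extension to rig-regular formal $B$-varieties (Theorem \ref{formdesth}), algebraization to $B$-schemes with small bad locus via formal completion along a divisor (Theorem \ref{locth}), and finally induction on codimension with localization and pushforward (Theorem \ref{calbth}). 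None of that machinery is replaced by your reduction; it is merely relocated into the unproven citation. To repair the proposal you would either have to prove \ref{divth} independently for general qe regular schemes (which amounts to redoing the same four steps) or prove \ref{Bth} directly as the paper does.
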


\begin{rem}\label{brem}
(i) Up to the non-embedded desingularization theorem \cite[1.2.1]{nemb}, Theorems \ref{divth} and \ref{Bth} are equivalent because one can construct $\calF(X,B)$ as the composition of the non-embedded desingularization $g\:X'\longto X$ with the blow up sequence $\calF_\div(X',g^\circ(B)^\div)\:X''\longto X'$, where $g^\circ(B)^\div$ is the divisorial part of $g^\circ(B)$.

(ii) We do not require that $X$ is generically reduced, and the algorithm simply blows up the non-reduced components at some stage.
\end{rem}

\subsubsection{Embedded desingularization}
Here is the strongest version of embedded desingularization which is achieved by our method so far, and we will see in \S\ref{proofsubsec} that it follows easily from Theorem \ref{Bth}. The main weakness of this variant is that it does not provide strong principalization in the sense of \S\ref{princsec} below. In addition, the center of the $i$-th blow up $X_i\to X_{i-1}$ does not have to be transversal to the boundary $E_{i-1}$, which is the complete transform of $E_0=E$. In particular, the intermediate boundaries $E_i$ can be singular, though the final boundary $E_n$ is snc.

\begin{theor}\label{embth}
For any quasi-excellent regular noetherian scheme $X$ of characteristic zero with an snc boundary $E$ and a closed subscheme $Z\into X$ there exists a blow up sequence $f=\calF_\emb(X,E,Z)\:X'\longto X$ such that

(i) $X'$ is regular, $E'=f^\circ(E)$ is snc and $Z'=f^!(Z)$ is regular and has simple normal crossings with $E'$,

(ii) each center of $f$ is regular and contained in the preimage of the set $T$ consisting of points $x\in Z$ such that $Z$ has not simple normal crossings with $E$ at $x$ (e.g. $Z$ is not regular at $x$),

(iii) $\calF_\emb$ is functorial with respect to strict regular morphisms; that is, given a regular morphism $g\:Y\to X$ with $D=E\times_XY$ and $W=Z\times_XY$, the blow up sequence $\calF_\emb(Y,D,W)$ is obtained from $g^*(\calF_\emb(X,E,Z))$ by omitting all empty blow ups.
\end{theor}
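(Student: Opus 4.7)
The plan is to deduce Theorem~\ref{embth} from Theorem~\ref{Bth} by applying the $B$-scheme desingularization to $Z$ equipped with the induced boundary, and then lifting the resulting blow-up sequence to the ambient $X$. Concretely, I form the $B$-scheme $(Z,B)$ with $B=\{E_1\times_XZ,\ldots,E_n\times_XZ\}$; each component is locally principal on $Z$ because each $E_i$ is a Cartier divisor on the regular $X$. After reducing to the generically reduced case (e.g., by functorially replacing $Z$ with $Z_\red$), Theorem~\ref{Bth} produces a canonical blow-up sequence $g=\calF(Z,B):Z^*\to Z$ with regular centers $V_i\subseteq Z_{i-1}$.

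Next, I lift $g$ to a blow-up sequence $f:X^*\to X$ by setting $X_0=X$ and inductively $X_i=\Bl_{V_i}(X_{i-1})$, viewing each $V_i$ as a closed subscheme of $X_{i-1}$. Since $X_{i-1}$ is regular by induction and $V_i$ is a regular closed subscheme of it, $V_i$ is regularly embedded in $X_{i-1}$ and $X_i$ is again regular; since $V_i\subseteq Z_{i-1}$, the strict transform of $Z_{i-1}$ in $X_i$ coincides with $\Bl_{V_i}(Z_{i-1})=Z_i$, so $f^!(Z)=Z_n=Z^*$ is regular by Theorem~\ref{Bth}(i). A key subtlety is that a center $V_i$ may be of codimension zero in $Z_{i-1}$ (giving a trivial blow-up of $Z$) while being of positive codimension in $X_{i-1}$; the resulting nontrivial blow-up of $X$ is precisely what separates components of $E$ that coincide or become tangent upon restriction to $Z$.

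It then remains to verify (i)--(iii) of Theorem~\ref{embth}. The regularity of $X^*$ and of $Z^*$ is immediate. That $E'=f^\circ(E)$ is snc on $X^*$ and that $Z^*$ has snc with $E'$ will follow from the semi-regularity of $(Z^*,g^\circ(B))$ provided by Theorem~\ref{Bth}(i), combined with $E$ being snc on $X$ to begin with: locally at a point $z\in Z^*$ one picks regular parameters of $X^*$ extending those of $Z^*$, and the fact that each $E_i$ restricts to a regular divisor on $Z^*$ (or is absent there) allows the $E_i$ to be identified, up to perturbations in $I_{Z^*}$, with $V(t_j)$ for distinct coordinates $t_j$, yielding snc both on $X^*$ and with $Z^*$. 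Property (ii) will follow from Theorem~\ref{Bth}(ii), since the non-semi-regular locus of $(Z,B)$ is precisely the locus where $Z$ is singular or where $Z$ fails to have snc with $E$. Property (iii) is inherited from the functoriality of $\calF$ together with the compatibility of lifting along $Z\hookrightarrow X$ with exact regular morphisms.

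The principal technical obstacle is verifying that $E'$ is snc on the full ambient $X^*$ (not merely on $Z^*$): this requires a careful local analysis ensuring that the lifted blow-ups do not create new bad intersections among the complete transforms of the boundary components on $X$, and ultimately relies on the design of $\calF$ within the $B$-scheme framework, so that its centers behave compatibly with the ambient snc structure of $E$.
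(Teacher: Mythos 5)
Your first half coincides with the paper's Step~1: form the $B$-scheme $(Z,B)$ with $B=E|_Z$, desingularize it by (a mild extension of) $\calF$ from Theorem~\ref{Bth}, and push the resulting blow up sequence forward along $Z\into X$; the identification of condition (ii) with the non-semi-regular locus of $(Z,E|_Z)$ and the appeal to Lemma~\ref{reslem} for $B'=E'|_{Z'}$ are exactly right. (One small caveat: simply replacing $Z$ by $Z_\red$ is not quite enough when $Z$ has generically non-reduced components, since then $f^!(Z)$ cannot be regular; the paper's Step~0 additionally blows up the components of the reduction underlying generically non-reduced components so as to kill them in the strict transform.)

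The genuine gap is in your treatment of the snc-ness of $E'=f^\circ(E)$ on all of $X'$. You flag this as ``the principal technical obstacle'' and hope it is resolved by a local analysis together with ``the design of $\calF$ within the $B$-scheme framework, so that its centers behave compatibly with the ambient snc structure of $E$.'' That mechanism fails: the centers produced by $\calF(Z,E|_Z)$ are regular but need not be $E$-permissible, i.e.\ need not have simple normal crossings with the ambient boundary, so the lifted blow ups genuinely can destroy snc-ness of the transforms of $E$ away from $Z'$ (the paper states this explicitly, both in the proof and in the remarks preceding Theorem~\ref{embth}, where it is noted that the intermediate boundaries $E_i$ can be singular). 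No local analysis at points of $Z'$ can repair this, because the failure occurs at points of $X'$ not on $Z'$. The paper's resolution is an additional step you are missing: after the pushforward one observes that, since $Z'$ has simple normal crossings with $E'$, the bad locus of the ambient $B$-scheme $(X',E')$ is disjoint from $Z'$ and is contained in the locus where $f$ is not an isomorphism; one then applies $\calF(X',E')$ once more to the ambient $B$-scheme. This second sequence is supported away from $Z'$ (so it preserves the regularity of $Z'$ and its snc contact with the boundary, and keeps condition (ii)) and converts $E'$ into an snc boundary. Without this second application of $\calF$ your construction does not yield part (i) of the theorem.
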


\subsubsection{Principalization}\label{princsec}
In the case of varieties, one can strengthen the above theorem by adding the condition that the principal transform $f^\rhd(Z)$ equals to the strict transform $Z'=f^!(Z)$. In particular, this implies that $Z\times_XX'=Z'+E_Z$ where $E_Z$ is a strictly monomial exceptional divisor, and after composing with the blow up along $Z'$ one obtains a strong {\em principalization} of $Z$ by a blow up sequence $X''\longto X$. Namely, $Z''=Z\times_XX''$ is a divisor with support on the snc divisor $E''$, and hence $Z''$ is strictly monomial. In this paper we only establish a weaker principalization for qe schemes over $\bfQ$, which obviously follows from our other results and suffices for many applications.

\begin{theor} \label{princth}
For any quasi-excellent noetherian scheme $X$ of characteristic zero with a closed subscheme $Z\into X$ there exists a $(Z\cup X_\sing)$-supported blow up sequence $\calF_\princ(X,Z)\:X'\longto X$ such that $X'$ is regular, $Z\times_XX'$ is strictly monomial and $\calF_\princ$ is functorial with respect to strict regular morphisms. \end{theor}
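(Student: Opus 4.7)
The plan is to reduce principalization to Theorem \ref{Bth} by first making the ideal of $Z$ locally principal via a preliminary blow up. Concretely, $\calF_\princ(X,Z)$ will be assembled from three functorial ingredients. First, I apply the non-embedded desingularization of \cite[1.2.1]{nonemb} to get an $X_\sing$-supported blow up sequence $g\colon X_1\to X$ with $X_1$ regular (passing to $X_\red$ first if $X$ fails to be generically reduced, since any regular target $X'$ factors through $X_\red$). Second, I blow up $X_1$ along $Z_1:=Z\times_XX_1$, obtaining $\pi\colon X_2\to X_1$: by the universal property of blowing up, $\calI_{Z_1}\cdot\calO_{X_2}$ is invertible, so $Z\times_XX_2$ is a Cartier divisor $E\subset X_2$, and $X_2$ is generically reduced (components of $X_1$ entirely contained in $Z_1$ are blown out of existence, and on the others the blow up of a regular scheme along a proper subscheme is generically reduced). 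Third, I apply Theorem \ref{Bth} to the $B$-scheme $(X_2,\{E\})$, producing $h\colon X_3\to X_2$ with $X_3$ regular and $h^\circ(\{E\})$ semi-regular. The composite $f:=h\circ\pi\circ g$ is the candidate for $\calF_\princ(X,Z)$.

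Verification of the conclusions is then bookkeeping. $X_3$ is regular by construction. The total transform $Z\times_XX_3$ coincides with $E\times_{X_2}X_3$ by functoriality of pullback along the two earlier blow ups, and the semi-regularity of the complete transform $h^\circ(\{E\})$ of the Cartier divisor $E$ forces this total transform to be strictly monomial on $X_3$, in direct parallel with the implication \textquotedblleft snc complete transform $\Ra$ strictly monomial total transform\textquotedblright\ recorded in Theorem \ref{divth}(ii) and collected in Lemma \ref{sumlem}. For the support condition: the centers of $g$ lie in $X_\sing$; the center $Z_1$ of $\pi$ projects onto $Z$; and the centers of $h$ lie in the non-semi-regular locus of $(X_2,\{E\})$, which is contained in $X_{2,\sing}\cup\Supp(E)$ and hence projects into $Z\cup X_\sing$. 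Functoriality in exact regular morphisms is inherited from the three canonical ingredients, since blowing up along a canonically defined subscheme is automatically functorial.

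The substantive work has already been absorbed into Theorem \ref{Bth}, which is precisely why the author characterizes this result as following \textquotedblleft obviously\textquotedblright\ from the preceding theorems. The one place where I would pause to check carefully is in Step 3: the definition of semi-regularity allows a boundary component to absorb connected components of the ambient scheme, and I want to be sure that this extra flexibility does not spoil the \emph{strictly monomial} conclusion for the total transform of our single divisor $E$. In the situation at hand this should be harmless because $E$ is Cartier of pure codimension one, so a component of $X_3$ cannot be absorbed into $E$; but verifying this against the precise conventions of Lemma \ref{sumlem} is the one piece of housekeeping I would insist on before declaring the argument complete.
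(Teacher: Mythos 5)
Your argument is essentially the paper's: the proof there blows up $X$ along $Z$ so that $Z\times_XX'$ becomes a Cartier divisor, takes $B'=\{Z\times_XX'\}$ as a one-component boundary, applies the desingularization $\calF$ of Theorem \ref{Bth}, and reads off strict monomiality of the total transform from the snc complete transform exactly as you do. Your only deviations are the preliminary non-embedded desingularization (the paper applies $\calF$ directly to $(\Bl_Z(X),\{Z\times_X\Bl_Z(X)\})$) and the explicit checks of generic reducedness and Cartier-ness; these are harmless and even slightly more careful than the original, though your one-clause treatment of generically non-reduced $X$ via $X_\red$ would need the $\tilcalF$-style fix of \S\ref{proofsubsec} (blowing up the generically non-reduced components outright) to actually produce a regular $X_1$.
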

\begin{proof}[Proof (assuming Theorem \ref{Bth})]
Let $f\:X'\to X$ be the blow up along $Z$ and $Z'=Z\times_XX'$. Note that $B'=\{Z'\}$ is a boundary on $X'$ and consider the desingularization $g=\calF(X',B')\:(X'',B'')\longto(X',B')$ of the $B$-scheme $(X',B')$ as in Theorem~\ref{Bth}. Note that $g$ is $Z$-supported because the bad locus of $(X',B')$ sits over $Z$. Also, $Z\times_XX''$ is a divisor with support contained in the snc boundary $B''$ and hence it is strictly monomial. We define $\calF_\princ(X,B)$ to be the composition $X''\longto X$.
\end{proof}

\subsubsection{Other categories}
Using the same argument as in \cite[\S5]{nemb} one can use the main desingularization theorems for noetherian qe schemes to prove their analogs for other (quasi-compact or not) geometric objects of characteristic zero. Also, it follows from the functoriality that the obtained desingularizations are equivariant.

\begin{theor}\label{catth}
(i) The functors $\calF$, $\calF_\div$, $\calF_\emb$ and $\calF_\princ$ induce analogous functors for quasi-compact (formal) qe stacks and complex/non-archimedean analytic spaces of characteristic zero.

(ii) All these functors extend to non-quasi-compact objects at the cost of replacing blow up sequences with blow up hypersequences in the sense of \cite[\S5.3.2]{nemb}, or simply with a proper desingularization morphism $X'\to X$ without a blow up sequence structure.

(iii) All these desingularizations are equivariant with respect to an action of a smooth group.
\end{theor}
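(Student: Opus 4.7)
The plan is to reduce each claim to the noetherian qe scheme case already established in Theorems \ref{divth}, \ref{Bth}, \ref{embth}, and \ref{princth}, following the strategy of \cite[\S5]{nonemb}. The essential point is that every one of the functors $\calF$, $\calF_\div$, $\calF_\emb$, $\calF_\princ$ is functorial with respect to exact regular morphisms, and each of the target categories in (i) admits presentations by qe noetherian affine schemes along regular morphisms. For a quasi-compact qe stack $\calX$ (resp.\ formal stack, complex analytic space, non-archimedean analytic space) of characteristic zero I would choose such an atlas $U\onto\calX$: a smooth (in particular regular) cover by an affine noetherian qe scheme for stacks; the formal completion of a noetherian affine scheme along an ideal in the formal case; Stein compact models / affinoid domains in the analytic cases, each pulled back to its noetherian ring of analytic functions. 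The resulting groupoid $U\times_\calX U\rra U$ then has both projections regular. Applying the scheme-level functor to $U$ (with the pulled-back boundary/embedding data) produces a blow up sequence, and the functoriality in exact regular morphisms forces this sequence to be compatible with the two projections and hence to descend to a canonical blow up sequence on $\calX$.

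The step I expect to be the main obstacle is the verification, in each of the geometric categories, that the constructions used (blow ups, transforms of boundaries, regularity, the bad locus) commute with the chosen regular chart maps, so that the descent is literal and not just formal. For schemes with an \'etale atlas this is routine. For formal schemes one must know that the desingularization blow up sequence on a noetherian affine model $\Spec(A)$ induces, after $I$-adic completion, the desired sequence on $\Spf(\hatA)$; this uses flatness of completion (a regular morphism) and the fact that the non-snc / non-semi-regular locus is detected on completions. For complex analytic spaces one relies on the analytification functor from schemes of finite type over $\bbC$ being regular, applied to suitable Stein neighbourhoods; for non-archimedean spaces one uses a quasi-compact admissible formal model and invokes Raynaud's correspondence between admissible blow ups of the model and blow ups of the generic fiber. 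In every instance the key input is the regularity of the transition maps between local charts, which is exactly what lets us glue via (iii) of the scheme theorems.

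For part (ii), given a not-necessarily quasi-compact object $\calX$, I would exhaust it by an ascending chain of quasi-compact open subobjects $\calX_1\subset\calX_2\subset\cdots$ with $\calX=\bigcup_n\calX_n$, apply part (i) to each $\calX_n$ to obtain a finite blow up sequence $f_n:\calX'_n\to\calX_n$, and then use functoriality along the regular open immersions $\calX_n\into\calX_{n+1}$ to identify $f_n$ with the restriction of $f_{n+1}$ modulo empty blow ups. Taking the colimit yields a blow up hyper-sequence (or just a single morphism $\calX'\to\calX$ if one forgets the stepwise structure). For part (iii), an action of a regular group $G$ on $\calX$ gives a regular action morphism $a:G\times\calX\to\calX$ and a regular second projection $p:G\times\calX\to\calX$; functoriality of $\calF_?$ in exact regular morphisms yields $a^*\calF_?(\calX)=p^*\calF_?(\calX)$ up to empty blow ups, which is exactly the equivariance condition, so the constructed desingularization automatically lifts to $G\times\calX$ and hence is $G$-equivariant.
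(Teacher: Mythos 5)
Your overall architecture is exactly what the paper intends: the paper gives no argument of its own for Theorem \ref{catth} beyond delegating to the method of \cite[\S5]{nonemb}, and that method is precisely the one you describe --- present the object by noetherian qe affine charts along regular morphisms, apply the scheme-level functor, and let functoriality in exact regular morphisms force agreement on overlaps (note that since the chart maps are surjective onto their images, no blow up becomes empty under pullback, so the pulled-back sequences agree literally and the descent of centers is unproblematic). Your treatment of (iii) is also the intended one: $a,p:G\times\calX\rra\calX$ are both regular, so $a^*\calF_?(\calX)=\calF_?(G\times\calX)=p^*\calF_?(\calX)$, which is equivariance (with the implicit hypothesis that the boundary and embedding data are themselves $G$-invariant, so that $a$ and $p$ are \emph{exact} regular morphisms).

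Two points deserve correction. First, for non-archimedean spaces the invocation of Raynaud's formal models is the wrong tool: admissible blow ups of a formal model govern modifications of the generic fiber but say nothing useful about its regularity, and the model itself lives over a ring of mixed or positive residue characteristic where no desingularization is available. The route actually used is that affinoid algebras $\calA$ in characteristic zero are noetherian and excellent, the map $\calM(\calA)\to\Spec(\calA)$ is regular (completed local rings agree), and the singular and non-snc loci correspond; one then applies $\calF_?$ to $\Spec(\calA)$ and analytifies, exactly as in the complex Stein-compact case. Second, in (ii) your exhaustion $\calX_1\subset\calX_2\subset\cdots$ with $\calX=\bigcup_n\calX_n$ silently assumes a countable cofinal chain of quasi-compact opens, which a general locally noetherian non-quasi-compact object need not admit (already a disjoint union over an uncountable index set fails). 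The correct statement is obtained by taking an arbitrary cover by quasi-compact opens and gluing the finite sequences via functoriality; since the lengths and positions of the local sequences need not align globally, the glued object is indexed by a more general ordered set --- this is precisely why the conclusion of (ii) is stated in terms of blow up hyper-sequences rather than sequences. With these two repairs your proposal coincides with the paper's intended proof.
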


\subsubsection{Compactification of a regular scheme with an snc divisor}
One often uses embedded resolution of singularities to compactify a smooth variety by adding an snc divisor. One may also want to compactify a variety containing an snc divisor. For example, this might be the case when one considers a partial compactification (e.g. by a horizontal divisor) that one wants to extend to a full compactification. Here is the corresponding result in the context of general qe schemes.

\begin{theor}\label{compacth}
Assume that $S$ is a noetherian qe scheme of characteristic zero, $Y$ is a regular scheme equipped with a separated morphism $Y\to S$ of finite type, and $Y_0\into Y$ is an open subscheme such that $D=Y\setminus Y_0$ is an snc divisor. Then there exists a regular scheme $\oY$, a proper morphism $\oY\to S$, and an open immersion $Y\into\oY$ over $S$ such that $\oD=\oY\setminus Y_0$ is an snc divisor.
\end{theor}

\begin{rem}
(i) Theorem \ref{compacth} looks very natural or even typical, and we will see that it is an easy corollary of Theorem~\ref{embth}. In a private correspondence, the author suggested to N. Solomon to use this result in his thesis in order to construct certain compactifications. To our surprise, we could not find this result in the existing literature, even when $Y$ is a $k$-variety and $S=\Spec(k)$.

(ii) Another surprise with Theorem \ref{compacth} is that a similar statement fails when $D$ is only assumed to be a normal crossings divisor, see Example~\ref{compexam}(ii). Perhaps this explains why Theorem~\ref{compacth} is, probably, new.
\end{rem}

\subsubsection{Semistable reduction for formal varieties}
Let $R$ be a discrete valuation ring of residue characteristic zero. Hironaka's theorem provides desingularization of $R$-schemes of finite type, and by purely combinatorial tools one deduces from it the semistable reduction theorem for generically smooth $R$-schemes of finite type, see \cite[p.198]{KKMS}. As an application of resolution of formal schemes, we will prove an analogous theorem for formal $R$-schemes. We refer to \cite[Section~2.1]{temdes} and \cite[\S\S2.4.9-2.4.11]{nemb} for the terminology on formal schemes and their blow ups and singular loci. See also Section~\ref{dregsec} for the definition of semistability. By the union of closed formal subschemes we mean the closed formal subscheme given by the intersection of the corresponding ideals. Recall that a morphism is {\em of topologically finite type} if it is locally of the form $\Spf(A)\to\Spf(R)$, where $R$ is an adic ring with an ideal of definition $J$ and $$A=R\{t_1\.t_n\}\llbracket x_1\.x_m\rrbracket/I$$ with an ideal of definition $(J,x_1\.x_n)$.

\begin{theor}\label{stabth}
Assume that $R$ is a complete discrete valuation ring containing $\bfQ$ and $\gtX$ is a reduced formal scheme, flat and of topologically finite type over $\gtS=\Spf(R)$. Let $k$ be the residue field of $R$ and $\gtZ=\gtX\otimes_Rk$. Then there exists a $(\gtX_\sing\cup\gtZ)$-supported blow up $\gtf\:\tilgtX\to\gtX$ and a finite extension of discrete valuation rings $R'/R$ with $\gtS'=\Spf(R')$ such that the normalization $\gtX'$ of $\tilgtX\times_\gtS\gtS'$ is regular and strictly semistable over $\gtS'$.
\end{theor}

\begin{rem}\label{stabrem}
(i) Similarly to \cite[Theorem~2.1.2(ii)]{berform}, we do not restrict semistable reduction to formal schemes of finite type over $\gtS$.

(ii) Note that $\gtZ$ is a closed formal subscheme of $\gtX$ that contains the closed fiber $\gtX_s$ and is called in \cite{bervanish3} the special fiber of $\gtX$ over $\gtS$. If $\gtX$ is rig-regular (see \S\ref{rigsec}) then $\gtZ$ contains $\gtX_\sing$ and therefore $\gtf$ is $\gtZ$-supported.
\end{rem}

\subsection{Overview}
Now, let us briefly discuss the structure of the paper.

\subsubsection{B-schemes}\label{bsubsec}
We devote \S\ref{bsec} to the definition and basic study of $B$-schemes, $B$-blow ups, desingularization of $B$-schemes, etc. In particular, we define boundaries and their transforms under blow up sequences. Such objects naturally arise in the desingularization theory, though they were introduced only very recently in \cite{CJS}. Actually, if one simply restricts an snc boundary $E$ on a regular scheme $X$ to a closed subscheme $Z\into X$, then the restriction $E|_Z$ is a (not necessarily divisorial) boundary. We make one further step with respect to \cite{CJS} by combining schemes and boundaries into a single object, a $B$-scheme.

A partial justification for introducing this new notion is that, up to the order of the boundary, $B$-schemes admit a nice interpretation as Zariski log schemes $(X,M)$ such that all the stalks $\oM_x$ are free monoids $\bfN^{k(x)}$. This observation is not really used in the paper but might be instructive.

\begin{rem}\label{bmotrem}
(i) Classical embedded desingularization operates with a datum $(X,E,Z)$. The $B$-scheme $(Z,E|_Z)$ provides a more economical but essentially equivalent desingularization datum. Our method of extending desingularization algorithms from varieties to qe schemes passes through formal varieties and involves algebraization of rig-regular formal varieties $\gtZ$ via Elkik's theorem. Recently Trushin extended this theorem to couples $(\gtZ,\gtD)$, where $\gtD$ is a rig-smooth closed formal subvariety of $\gtZ$, see \cite[Theorem~51]{Trushin}. Perhaps, this can be extended further to rig-snc divisors, triples of subschemes, etc., but this is not known. Current algebraization results do not allow to work with formal triples $(\gtX,\gtE,\gtZ)$, but we manage to apply them to certain formal $B$-varieties. This is the main reason for using $B$-schemes in this paper. Some other motivation is discussed in the Appendix.

(ii) The final decision to adopt the language of $B$-schemes was made when I saw \cite{CJS} and its technique of working with general boundaries. In particular, the notions of principal and complete transforms are borrowed from \cite{CJS}.
\end{rem}

\subsubsection{Desingularization functors}
In \S\ref{dessec} we prove our main results on desingularization of $B$-schemes. This is done in four steps worked out in \S\S\ref{bvarsec}--\ref{genbsec}. In \S\ref{bvarsec} we establish the case of $B$-varieties by constructing the functor $\calF_\BVar$. The general idea is to simply apply the non-embedded desingularization to $X$ and then apply the embedded desingularization to the boundary. However, one must be slightly more careful in order not to destroy the entire snc locus of $B$, and for this we add an intermediate step in which we separate the old boundary from the singular locus.

Then, we extend in \S\ref{formvarsec} the functor $\calF_\BVar$ to the functor $\hatcalF_\BVar$ on formal $B$-varieties whose boundary and singular locus are supported on the closed fiber. The analogous step in \cite{nemb} is the most technical and subtle one. Fortunately, the argument from \cite{nemb} extends verbatim to our more general situation.

In \S\ref{smallsec} we desingularize a $B$-scheme $(X,B)$ with a fixed divisor $Z\into X$ which contains the bad locus $(X,B)_\ssing$ and is a disjoint union of varieties. The first and main step is to separate the old boundary from the bad locus, and this is done in the same way as in the case of varieties. After that the formal completion of $(X,B)$ along $Z$ can be desingularized by $\hatcalF_\BVar$. Moreover, the latter desingularization blows up only open ideals and hence algebraizes to a desingularization of $(X,B)$.

Finally, in \S\ref{genbsec} we construct a desingularization $\calF(X,B)$ of general qe $B$-schemes $(X,B)$ of characteristic zero. This is based on the desingularization of $B$-schemes with small bad locus and is done by induction on codimension similarly to the proof of \cite[Theorem~1.2.1]{nemb} in \cite[\S4.3]{nemb}.

\subsubsection{Semistable reduction}
In Section \ref{semisec} we deduce semistable reduction for formal schemes, see Theorem~\ref{stabth}, from desingularization of formal varieties with divisors, see Theorem~\ref{catth}. As in the case of schemes, such a reduction can be done by combinatorial methods of \cite{KKMS}. Usually this phrase is the only argument one provides in analogous situations, and it is not easy to find details in the existing literature. Section~\ref{semisec} aims to fill this gap. In addition, we use the language of Kato's fans and log regular schemes, that were developed after \cite{KKMS}, and we show how log regularity extends to qe formal schemes.

In fact, we reduce the semistable reduction to what we call a $d$-regularization problem, see Theorem~\ref{dregth}. It is a more general absolute statement, that makes sense for any qe formal scheme with a divisor. Applying Theorem~\ref{catth}, the latter result reduces to the particular case of log regular formal log schemes with monoidal divisors, see Theorem~\ref{formmonprth}. Finally, we use Kato's fans to reduce $d$-regularization on log regular (formal) log schemes to the main combinatorial result of \cite{KKMS}.

\subsubsection{The Appendix}
One could write this paper without using the notion of $B$-schemes and even without using not snc boundaries; of course, this would require to formulate the main results in another but equivalent way. Nevertheless, the language of $B$-schemes seems to be very natural for our task and we discuss the reasons for this in the Appendix. It is not used in the paper, but can be instructive. In particular, we explain in \S\ref{strsec} why a naive boundary, which is a single divisor, would not work as fine as our notion, and also correct a mistake in \cite{temdes} which was caused by a confusion between these two.

\subsubsection{Our method and future research}
The bottleneck of the method used in this paper and in \cite{nemb} is the passage from varieties to formal varieties. It makes use of Elkik's algebraization theorem, which does not apply to complicated desingularization data. Trushin's theorem \cite[Theorem~51]{Trushin} improves the situation slightly. In fact, using it one could strengthen the results of this paper in few aspects. In particular, one could establish $B$-strong desingularization of generically reduced $B$-schemes (see Remarks~\ref{BVarrem} and \ref{formrem}). This strengthening is not used in most of applications but would require a substantial additional work, so we decided not to work it out in the paper.

Furthermore, currently it seems that a simpler approach to desingularization of qe schemes is to directly show that the algorithm of Bierstone-Milman applies to arbitrary qe schemes of characteristic zero that possess enough derivations. This would cover the case of formal varieties and would make the algebraization step unnecessary. I plan to study this approach elsewhere.

\subsubsection{Conventions}\label{convsec}
All (formal) schemes are assumed to be locally noetherian. A {\em variety} means a scheme of finite type over a field. We keep the conventiones of \cite[\S2]{nemb}; in particular, a blow up sequence ``remembers" the centers of all blow ups, and a blow up is called {\em trivial} or {\em empty} if its center is empty.

By a {\em component} of a scheme $X$ we mean a disjoint union of several connected components of $X$. Assume that $X$ is a scheme with a closed subscheme $Z$. Then by $|Z|$ we denote the support of $Z$, which is the underlying closed set, and by $\calI_Z\subseteq\calO_X$ we denote the ideal of $Z$. So, $Z=\Spec_X(\calO_X/\calI_Z)$. We say that $Z$ is {\em locally principal} (resp. a {\em Cartier divisor}) if $\calI_Z$ is locally principal (resp. invertible). If $D\into X$ is a Cartier divisor then for any $n\in\bfN$ we define $Z+nD$ to be the closed subscheme defined by $\calI_Z\calI^n_D$. Note also that the fractional ideal $\calI_Z\calI^{-n}_D$ is an ideal if and only if $nD\into Z$, and in this case we denote the corresponding subscheme as $Z-nD$.

Given a morphism $f\:X'\to X$, it will be convenient to use the notation $f^*(Z):=Z\times_XX'$ for the pullback of $Z$, and when $f$ is an immersion, we will often call $f^*(Z)$ the {\em restriction} of $Z$ onto $X'$ and denote it by $Z|_{X'}$. Also, in this case for any morphism $g\:Y\to X$ (e.g. a blow up) we will write $g|_{X'}=g\times_XX'$.


\section{Boundaries and desingularization}\label{bsec}

\subsection{Schemes with boundaries}\label{bschsec}

\subsubsection{Boundary}
In this paper we will only work with ordered boundaries so by a {\em boundary} on a scheme $X$ we mean a tuple $B=\{B_i\}_{i\in I}$ indexed by a finite ordered set $I$ in which each $B_i$ is a locally principal subscheme of $X$. It can happen that $B_i=B_j$ for $i\neq j$. Each $B_i$ is called a {\em component} of $B$ or a {\em boundary component}. We say that $B$ is {\em divisorial} if all its components are Cartier divisors.

We will ignore empty components and will be only interested in the equivalence class of the ordered index set $I$. So, any boundary can be uniquely represented in its {\em reduced form} as $B=\{B_1\. B_n\}$, where all $B_i$'s are non-empty. We say that two boundaries on $X$ are {\em equal} if their reduced forms are equal. Nevertheless, it is convenient to also consider boundaries in non-reduced form because they can be produced by natural operations. For example, some components can vanish when restricting onto a closed subscheme $X'$ of $X$.

We define the {\em ordered disjoint union} of boundaries as $$B\sqord B'=\{B_1\. B_n,B'_1\. B'_{n'}\}.$$ The {\em support} of $B$ is the closed subset $|B|=\cup_{i\in I}|B_i|$ of $X$, and we also define a finer {\em schematical support} of $B$ as $[B]=\sum_{i\in I}B_i$. It is well defined even for non-divisorial boundaries.

\begin{rem}
(i) An analogous definition of boundaries is given in \cite{CJS}, where one prefers the non-ordered variant of the definition.

(ii) We do not require that the elements that locally define $B_i$'s are not zero divisors, so $B_i$'s do not have to be Cartier divisors. This is convenient because we can then restrict a boundary onto any closed subscheme.
\end{rem}

\subsubsection{The stratification induced by $B$}
For each point $x\in X$ by $I(x)$ (resp. $\tilI(X)$) we denote the set of all $i\in I$ such that $x\in B_i$ (resp. $x\in B_i$ and $B_i$ is not the whole $X$ in a neighborhood of $x$, i.e. the element defining $B_i$ does not vanish in $\calO_{X,x}$). Also, for any subset $J=\{j_1\. j_m\}\subseteq I$ we set $\oB_J:=B_{j_1}\times_X\dots\times_X B_{j_m}$ and define the $J$-th stratum $B_J$ of $B$ as the open subscheme of $\oB_J$ obtained by removing each $\oB_{J'}$ with $J\subsetneq J'$. In particular, $\oB_\emptyset=X$ and $x\in B_{I(x)}$.

\begin{rem}
Although we will not need the following observation in this paper, it can give an alternative point of view on the nature of $B$. Giving an unordered boundary on $X$ is equivalent to giving a Zariski log structure $M$ on $X$ such that for each point $x\in X$ the monoid $\oM_x$ is free. Actually, under this correspondence one has that $\oM_x\toisom\bfN^{I(x)}$ and the images of the generators of $\oM_x$ in $\calO_X$, which are well defined up to units, define the subschemes $B_i$ locally at $x$.
\end{rem}

\subsubsection{Pullback and restriction}
Given a morphism $f\:Y\to X$ and a boundary $B=\{B_i\}_{i\in I}$ on $X$, the pullbacks $f^*(B_i):=B_i\times_XY$ are locally principal and we define the pullback boundary $f^*(B)=\{f^*(B_i)\}_{i\in I}$. In the case when $f$ is an immersion we will also call $f^*(B)$ the {\em restriction} of $B$ on $Y$ and denote it $B|_Y$.

\subsubsection{$B$-schemes}
A {\em scheme with boundary} or simply a {\em $B$-scheme} is a pair $(X,B)$ consisting of a scheme $X$ with a boundary $B$. We will say that $(X,B)$ is {\em qe, of characteristic zero, generically reduced}, etc. if the scheme $X$ is so. Note, however, that the notion of regular $B$-schemes will be defined below in a different way.

\subsubsection{Morphisms of $B$-schemes}\label{Bmorsec}
A morphism of $B$-schemes $F:(X',B')\to(X,B)$ is a morphism of the corresponding log schemes $(X',\calM_{X'})\to(X,\calM_{X})$, i.e a morphism $f\:X'\to X$ and a homomorphism $f_\calM\:f^*\calM_X\to\calM_{X'}$ compatible with the structure homomorphisms $\calM_X\to\calO_X$ and $\calM_{X'}\to\calO_{X'}$ of the log schemes. Locally, giving such a homomorphism $f_\calM$ is equivalent to giving a matrix $g\in\bfN^{I\times I'}$ such that $f^*(B_i)=gB'=\sum_{i'\in I'}g(i,i')B'_{i'}$ for each $i\in I$. In fact, if $B_i$ are connected then $g$ is also defined globally. We say that $F$ is {\em regular} if $f$ is regular.

If $g$ is the identity matrix then we say that $F$ is {\em strict}. Thus, strict morphisms of $B$-schemes correspond to strict morphisms of log schemes (i.e. pairs $f,f_\calM$ such that $\calM$ is an isomorphism) that also respect the order of components. Given a $B$-scheme $(X,B)$, any morphism $f\:X'\to X$ extends to a strict morphism $F$ in a unique way by taking $B'=f^*(B)$.

\begin{rem}
(i) The reader can ignore the definition involving log geometry. In this paper we will only be interested in strict regular morphisms and $B$-blow up sequences, see \S\ref{blsec}. The latter can (and will) be introduced in a simple ad hoc manner; they are almost never strict.

(ii) Note, for sake of completeness, that in ``non-degenerate'' situation $g$ (and $f_\calM$) is determined by $f$. For example, this is the case then all components $B'_i$ are Cartier divisors without common irreducible components. However, already when $X'=X$, $B=\{B_1\}$ and $B'=\{B'_1,B'_2\}$ with $B'_1=B'_2=B_1$, there are two choices of $g$.
\end{rem}

\subsubsection{Snc and monomial boundaries}
A boundary $B$ on $X$ is called {\em snc at $x$} if $B_{I(x)}$ is regular and of codimension at least $|I(x)|$ at $x$. We say that $B$ is {\em snc} if it is snc at any point of $X$, in particular, our definition is not local at $|B|$. Similarly, we say that $B$ is {\em strictly monomial} if $X$ is regular and $|B|$ is an snc divisor. Snc boundaries satisfy the following nice properties, that are often used to define snc divisors.

\begin{lem}\label{snclem}
If $B$ is snc at $x$ then each $B_i$ is divisorial at $x$ and for any $J\subseteq I(x)$ the closed stratum $\oB_J$ is regular and of codimension $|J|$ at $x$. In particular, $B$ is snc if and only if each non-empty $\oB_J$, including $\oB_\emptyset=X$, is regular of pure codimension $|J|$.
\end{lem}
\begin{proof}
The codimension of a locally principal subscheme does not exceed one, hence the codimension of $B_{I(x)}$ at $x$ is precisely $|I(x)|$, and it follows that for each $J\subseteq I(x)$ the codimension of $\oB_J$ at $x$ is precisely $|J|$. Now, the regularity of $\oB_J$ at $x$ follows from the following simple observation: if $A$ is a local noetherian ring with an element $x\in A$ such that $\Spec(A/xA)$ is regular of codimension 1 in $\Spec(A)$ then $A$ is regular and $x$ is not a zero divisor in $A$.
\end{proof}

For completeness we also discuss a connection between different notions. This simple result will not be used, so the proof is omitted.

\begin{lem}\label{sncrem}
For a boundary $B$ the following conditions are equivalent:

(i) $B$ is snc,

(ii) the closed subscheme $[B]$ is an snc divisor, and all components $B_i$ of $B$ are regular,

(iii) $B$ is strictly monomial, each $B_i$ is a regular divisor (not necessarily connected) and no pair $B_i,B_j$ with
$i\neq j$ has a common irreducible component.
\end{lem}

\subsubsection{Regular and semi-regular $B$-schemes}
A $B$-scheme $(X,B)$ is {\em regular} if $B$ is snc. Also, we will often use a slight weakening of the regularity condition. Namely, we say that a $B$-scheme $(X,B)$ is {\em semi-regular} if locally at each point $x\in X$ the stratum $B_{\tilI(x)}$ is regular and of codimension $|\tilI(x)|$. Semi-regularity at $x$ means that for a neighborhood $U$ of $x$ we can split $B|_U$ as unordered disjoint union $B'\coprod B''$ so that $B'$ is snc and $B''$ consists of a few copies of $U$.

\subsubsection{Regular and singular locus}
For a qe $B$-scheme $(X,B)$ the set of points $x$ at which $(X,B)$ is semi-regular form an open subset which will be denoted $(X,B)_\sreg$. Its complement will be denoted $(X,B)_\ssing$ and will often be simply called the {\em bad locus} of $(X,B)$. (Another option might be the ``strongly singular" locus.) We will not make use of other regular/singular loci of $(X,B)$.

\begin{lem}\label{locuslem}
Assume that $X$ is a scheme, $B=\{B_1\.B_n\}$ is a divisorial boundary, and $1\le l\le n$. Set $\tilX=B_l$, $B-B_l=\{B_1\.B_{l-1},B_{l+1}\. B_n\}$, and $\tilB=(B-B_l)|_\tilX$. Then $(X,B)_\ssing\cap\tilX=(\tilX,\tilB)_\ssing$.
\end{lem}
\begin{proof}
We should prove that $(X,B)$ is regular at a point $x\in\tilX$ if and only if $(\tilX,\tilB)$ is regular at $x$. The direct implication is clear. Conversely, assume that $(\tilX,\tilB)$ is regular at $x$. Localizing we can assume that $X$ is local with closed point $x$. Since $\tilX$ is a divisor in $X$ and $\tilX$ is regular by Lemma~\ref{snclem}, $X$ is also regular and hence catenary. Set $I=I(x)$ and $\tilI=I\setminus\{l\}$ and observe that $B_I=\tilB_\tilI$ is regular of codimension $|\tilI|$ in $\tilX$. Since $\tilX$ is a divisor and $X$ is catenary, $B_I$ is of codimension $|\tilI|+1=|I|$, as required.
\end{proof}

\subsection{Blow up sequences and basic operations}\label{blsec}
In \S\ref{blsec} we will study transforms of the boundaries under blow up sequences. One easily sees that the strict transform of a locally principal subscheme does not have to be locally principal, so the strict transform is useless in this context. Although for a boundary $B=\{B_i\}_{i\in I}$ on $X$ and a blow up $f\:\Bl_V(X)\to X$ the full transform $f^*(B)=\{f^*(B_i)\}_{i\in I}$ is defined, it is not the transform one usually uses. Many components of $f^*(B)$ may contain the exceptional divisor $E=f^*(V)$ and usually one tries to split off redundant copies of $E$, at least to some extent. This leads to definitions of principal and complete transforms given below. Also, we will view the exceptional component $E_f=\{E\}$ as a boundary rather than a single divisor. This becomes sensitive when extending the notions of the transforms to blow up sequences $f$, since the new boundary $E_f=\{E_1\.E_n\}$ keeps track of the order of blow ups of $f$.

\subsubsection{Principal transform of closed subschemes}

\begin{rem}
An important role in embedded resolution of singularities is played by a so called principal (weak or controllable) transform of ideals or marked ideals under blow ups. It is obtained from the full (or total) transform $f^*(Z)$ by removing an appropriate multiple (depending on the setting) of the exceptional divisor. Thus, the principal transform is a small step from the full transform towards the strict one, which still can be easily described by explicit formulas in terms of the corresponding ideals.
\end{rem}

Let $f\:X'\to X$ be the blow up along $V\into X$ and let $Z\into X$ be a closed subscheme. For our needs it will be convenient to adopt the following variant of principal transform of $Z$ under $f$. Decompose $V$ as $T\coprod W$, where $T$ is the union of all connected components of $V$ that are closed subschemes in $Z$. Then the exceptional divisor $V'=f^*(V)$ possesses a component $T'=f^*(T)$ which is contained in $f^*(Z)$, and hence the closed subscheme $f^*(Z)-T'$ is defined. We call it the principal transform of $Z$ and denote it by $f^\rhd(Z)$. The {\em principal transform} $g^\rhd(Z)$ with respect to a blow up sequence $g\:X_n\longto X_0=X$ is defined iteratively.

\begin{rem}\label{transformrem}
(i) In sharp contrast with strict and full transforms, the principal transform is not local on the base because it can happen that $V$ is connected and is not a subscheme of $Z$, in particular, $f^\rhd(Z)=f^*(Z)$, but $\emptyset\neq V|_U\into Z|_U$ for an open subscheme $U\into X$. The complete transform which will be defined later is also of a non-local nature.

(ii) Nevertheless, if $V$ is regular (or just has integral connected components) then the above problem cannot happen and all transforms are of local nature.
\end{rem}

\subsubsection{Principal transform of the boundary}
Note that the principal transform of any locally principal closed subscheme is a locally principal closed subscheme. Therefore, given a $B$-scheme $(X,B)$ with $B=\{B_i\}_{i\in I}$ and a blow up sequence $f\:X'\longto X$ we can define the {\em principal transform} $f^\rhd(B)$ as the tuple $\{f^\rhd(B_i)\}_{i\in I}$. By the very definition, the principal transform is compatible with compositions of blow up sequences.

\begin{rem}
An equivalent definition of the principal transform is given in \cite[4.4]{CJS}, where the transform is called ``principal strict transform". We prefer to change the terminology because there might be smaller principal transforms containing the strict transform (e.g. when we modify the definition as $f^\square(Z)=f^*(Z)-\sum_i n_if^*(V_i)$ where $V_i$'s are the connected components of the center of $f$ and $n_i$ is the maximal number for which $n_if^*(V_i)$ is a subscheme of $f^*(Z)$).
\end{rem}

\subsubsection{Complete transform of the boundary}
Assume that $(X,B)$ is as above and $X'=\Bl_V(X)$. Then we define the {\em complete transform} of the boundary $f^\circ(B)=f^\rhd(B)\sqord E_f$, where $E_f=\{E\}$ and $E=V\times_XX'$ is the {\em exceptional divisor} of the blow up along $V$. Note that $E$ depends on the blow up and is not determined only by the morphism $X'\to X$, and we adjoin $E$ as a new element even when $f^\rhd(B)$ contains its copies. We have that $|f^*(B)|\subseteq|f^\circ(B)|=|f^*(B)|\cup|E_f|$, because $[f^\circ(B)]=[f^\rhd(B)]+E=([f^*(B)]-nE)+E$ where $n$ is a number for which $[f^*(B)]-nE$ is defined. We warn the reader that unlike the principal transform, the complete transform is not additive, i.e. $f^\circ(B\coprod B')\neq f^\circ(B)\coprod f^\circ(B')$ even as unordered sets.

If $f\:X'\longto X$ is a general blow up sequence then we define the complete transform $f^\circ(B)$ iteratively. In particular, $f^\circ(B)$ is the ordered disjoint union of the {\em old boundary} $f^\rhd(B)$ and a {\em new boundary} $E_f$ which we also call the {\em boundary of the blow up sequence} $f$.

\begin{exam}\label{blexam}
(i) Assume that $V=B_i\in B$ is a Cartier divisor whose connected components are not contained in any $B_j$ with $j\neq i$. Then $X'\toisom X$ and $f^\circ(B)$ equals to $B$ as unordered sets. However, the order is different because we move $B_i$ to be the last element in the boundary. Indeed, $f^\rhd(B_i)=\emptyset$ and so we remove the $i$-th component, but $E_f=B_i$ and so we adjoin the same component with the largest index.

(ii) If $B_i=nV$ then the $i$-th boundary component disappears after $n$ blow ups with center at $V$.
\end{exam}

\subsubsection{Summary of transforms}\label{sumsec}
For the sake of referencing we collect basic properties of the transforms in the following lemma. Since the strict transform $f^!(B)$ is not defined (at least as a boundary), we will consider $f^!(|B|)$ and $f^!([B])$ instead.

\begin{lem}\label{sumlem}
Let $f\:X'\longto X$ be a blow up sequence with new boundary $E_f$ and let $B=\{B_1\. B_n\}$ be a boundary on $X$, then

(i) $|E_f|$ is the reduced exceptional divisor, i.e. $|E_f|$ is the smallest closed subset of $X'$ such that $f$ restricts to a composition of trivial blow ups over $X\setminus f(|E_f|)$.

(ii) The total and principal transforms are componentwise in the sense that $f^*(B)=\{f^*(B_1)\. f^*(B_n)\}$ and $f^\rhd(B)=\{f^\rhd(B_1)\. f^\rhd(B_n)\}$, and the complete transform is obtained from the principal transform by adjoining $E_f$, i.e. $f^\circ(B)=f^\rhd(E)\sqord E_f$.

(iii) We have componentwise inclusions of strict, principal and full transforms $f^!(B_i)\into f^\rhd(B_i)\into f^*(B_i)$, where the last two schemes are always locally principal. In addition, $f^*(B_i)=f^\rhd(B_i)+D_i$, where $D_i$ is an exceptional divisor (i.e. $|D_i|\subset|E_f|$).

(iv) On the level of supports the transforms are related as follows: $|f^!(|B|)|\subseteq|f^\rhd(B)|\subseteq|f^*(B)|\subseteq|f^\circ(B)|=|f^!(|B|)|\cup|E_f|$.

(v) On the level of divisorial supports the transforms are related as follows: $f^!([B])\into[f^\rhd(B)]\into[f^*(B)]$ and $[f^\rhd(B)]+[E_f]=[f^\circ(B)]$.

(vi) Principal transform of closed subschemes agrees with the principal transform of marked ideals of order one. More concretely, assume that $\ucalI=(N,\emptyset,N,\calI,1)$ is a marked ideal with a permissible blow up sequence $f\:N'\longto N$ and let $\ucalI'=(N',E',N',\calI',1)$ be the transform of $\ucalI$ (see \cite[\S2]{BMfun}). Then $Z'=f^\rhd(Z)$ where $Z'\into N'$ and $Z\into N$ are the closed subschemes defined by $\calI'$ and $\calI$. In particular, if $f$ is a resolution of $\ucalI$ then $f^\rhd(Z)=\emptyset$.
\end{lem}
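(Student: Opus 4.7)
The plan is to verify each assertion by reducing to a single blow up and unwinding the definitions of \S\ref{blsec}; the extension to a general blow up sequence $f=(f_1\.f_m)$ then follows by induction, since all of the transforms $f^*$, $f^\rhd$, $f^\circ$, $f^!$ are defined iteratively and compatibly with composition. So I fix $f\colon X'=\Bl_V(X)\to X$ with exceptional divisor $E_f=f^{-1}(V)$ and a boundary $B=\{B_1\.B_n\}$ on $X$.

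For (i) I would invoke the standard property of blow ups that $f$ restricts to an isomorphism $X'\setminus|E_f|\toisom X\setminus V$, and note that no proper closed subset of $|E_f|$ has an open complement that maps isomorphically onto its image, since $E_f$ is a Cartier divisor whose support is precisely $f^{-1}(V)$. Parts (ii) and (iii) are essentially tautological from the definitions: $f^*(B)$ is componentwise by definition, and by construction $f^\rhd(B_i)=f^*(B_i)-f^*(U_i)$, where $U_i\subseteq V$ is the union of those connected components of $V$ that are contained in $B_i$. This proves $f^\rhd$ is componentwise and gives the decomposition $f^*(B_i)=f^\rhd(B_i)+D_i$ with $D_i:=f^*(U_i)$ an exceptional divisor supported in $|E_f|$. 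The inclusion $f^!(B_i)\subseteq f^\rhd(B_i)$ holds because $f^*(U_i)\subseteq D_i^{\max}$, where $D_i^{\max}$ is the largest exceptional divisor contained in $f^*(B_i)$; the strict transform removes the whole $D_i^{\max}$ while the principal transform removes only $D_i$. The equality $f^\circ(B)=f^\rhd(B)\sqord E_f$ is just the definition.

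Parts (iv) and (v) follow by taking supports of the inclusions and equalities in (iii). The only slightly subtle point is the equality $|f^\circ(B)|=|f^!(|B|)|\cup|E_f|$: the inclusion $\supseteq$ is clear from (iii), and the reverse inclusion holds because over the open set $X'\setminus|E_f|\toisom X\setminus V$ the total, principal and strict transforms all coincide with $f^{-1}(|B|\setminus V)$, while over $|E_f|$ everything is absorbed into the $E_f$ component of $f^\circ(B)$. On the divisorial level, summing the componentwise identities of (iii) together with $[f^\circ(B)]=[f^\rhd(B)]+E_f$ gives (v).

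For (vi) I would compare with the transform formula in \cite[\S2]{bmfun}. For a marked ideal $\ucalI=(N,N,\emptyset,\calI,1)$ of order one with an admissible blow up along $V$, admissibility means $V\subseteq V(\calI)=Z$, so every connected component of $V$ is contained in $Z$, i.e.\ $U=V$ in the notation above. The transform rule for marked ideals of order one reads $\calI'=\calI\cdot\calO_{N'}\cdot\calI_{E_f}^{-1}$, which on the subscheme side is precisely $f^*(Z)-E_f=f^\rhd(Z)$. Iterating along the admissible sequence $f$ gives $Z'=f^\rhd(Z)$, and if $f$ resolves $\ucalI$ then $\calI'=\calO_{N'}$, i.e.\ $f^\rhd(Z)=\emptyset$. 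The main (mild) obstacle is purely notational: matching the conventions for connected components of $V$ in our definition of $f^\rhd$ with the global exceptional-component bookkeeping in the marked ideal formalism; once admissibility is used to identify $U$ with $V$, the identification is immediate.
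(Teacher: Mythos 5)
Your proposal is correct and follows essentially the same route as the paper: parts (i)--(v) are unwound from the definitions and propagated through the sequence by induction on its length, and (vi) is deduced by observing that admissibility places the center inside the cosupport $|Z|$ (hence, the center being reduced, scheme-theoretically inside $Z$, so $U=V$), after which the order-one transform rule $\calI'=\calI_{E}^{-1}(\calI\calO_{N'})$ is literally the principal transform, with the cosupport of the transformed marked ideal equal to $|f^\rhd(Z)|$ at each stage so the induction continues. The only cosmetic difference is your phrasing in (iii) of the strict transform as ``removing $D_i^{\max}$'' (it is not in general obtained by subtracting a divisor, but the containment $f^!(B_i)\into f^\rhd(B_i)$ you need follows anyway since $U'$ is a union of connected components of the exceptional divisor), which does not affect the argument.
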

Note that $f^!(B_i)$ denotes the strict transform of the boundary component $B_i$, which is a scheme. Also, it is easy to see that all embeddings do not have to be equalities. For example, if $B$ is empty and $f$ is non-trivial then $|f^*(B)|=\emptyset\neq E_f=|f^\circ(B)|$.
\begin{proof}
The assertions (i)--(vi) are easily verified by induction on the length, and many statements just repeat the definitions. We will only prove (vi) to illustrate this. Let $V$ be the first center of $f$. Then $V$ is in the cosupport of $\ucalI$ which is precisely $|Z|$, and hence $Z_1=f^\rhd(Z)=f^*(Z)-f^*(V)$. On the other hand, $\calI_1=\calI_E^{-d}(\calI\calO_{N'})$ where $E=f^*(V)$ is the exceptional divisor and $d=1$ is the order of $\ucalI$. This implies the claim for a sequence of length one, and we deduce that $\cosupp(\ucalI_1)=|Z_1|$ contains the second center of $f$. So, we can repeat the argument for the second blow up, etc., thus mastering induction on the length.
\end{proof}

\begin{rem}
Typically, one operates with principal and complete transforms when building a desingularization functor, and a desingularization is often achieved by getting an empty $f^\rhd(B)$ and an snc $f^\circ(B)$. It then follows from Lemma \ref{sumlem}(iv) that $f^!(|B|)$ is empty and $f^*(B)$ is strictly monomial.
\end{rem}

\subsubsection{$B$-blow up sequences}
We introduced $B$-schemes in order to control the boundaries. In particular, it is important to control the boundary of the blow up sequences. So, we define the {\em $B$-blow up} $F\:(X',B')\to(X,B)$ as a blow up $f\:X'\to X$ such that $B'=f^\circ(B)$. By the {\em center} of $F$ we mean the center of $f$ and say that $F$ is {\em trivial} if $f$ is trivial.

\begin{rem}\label{blowrem}
(i) For this paper it suffices to use the above formal definition of $B$-blow ups, but we note for the sake of completeness that a $B$-blow up possesses a natural structure of a morphism of $B$-schemes. Indeed, for each $B_i\in B$ we have that $f^*(B_i)=f^\rhd(B_i)+nE_f$, where $n\in\{0,1\}$ and both $f^\rhd(B_i)$ and $E_f$ are components of $B'=f^\circ(B)$. For the sake of comparison, $(X',f^\rhd(B))\to(X,B)$ usually cannot be equipped with a structure of a $B$-morphism.

(ii) Unlike blow ups of schemes, usually a $B$-blow up $F$ is not an isomorphism of $B$-schemes even when its center $V$ is a Cartier divisor. Actually, one easily sees that $F$ is an isomorphism if and only if either $f$ is trivial or we are in the situation of Example \ref{blexam}(i) and in addition $V=B_i$ is the largest element of the boundary. In the second case, the largest component of the boundary is killed by the transform and then adjoined again as the new component.
\end{rem}

Naturally, a {\em $B$-blow up sequence} $F\:(X',B')\longto (X,B)$ is a sequence of $B$-blow ups. Giving such a sequence with target $(X,B)$ is equivalent to giving a blow up sequence $f\:X'\longto X$ because the boundaries are uniquely determined as complete transforms. Therefore, given a $B$-scheme $(X,B)$ we will pass freely between blow up sequences of $X$ and $B$-blow up sequences of $(X,B)$.

\subsubsection{Restriction of transforms onto closed subschemes}
Assume that $f\:X'\to X$ is a blow up along $V$ and $Z\into X$ is a closed subscheme. An easy explicit computation on the charts of $\Bl_V(X)$ shows that the strict transform $Z'=f^!(Z)$ is the blow up of $Z$ along $\tilV=V|_Z$, see \cite[\S1]{Conrad}. This also implies that the restriction of the exceptional divisor $V\times_XX'$ onto $Z'$ is the exceptional divisor $\tilV\times_ZZ'$ of the blow up $\tilf\:Z'\to Z$. Assume now that $B$ is a boundary on $X$ and $\tilB=B|_Z$ is its restriction on $Z$. By the transitivity of fibred products we obviously have that $(B\times_XX')|_{Z'}=\tilB\times_ZZ'$. In other words, the full transform $f^*$ is compatible with the restriction onto closed subschemes.

The situation with other transforms is more delicate: the equalities $f^\rhd(B)|_{Z'}=\tilf^\rhd(\tilB)$ and $f^\circ(B)|_{Z'}=\tilf^\circ(\tilB)$ fail if and only if there exists $B_i\in B$ and a connected component $V'\into V$ such that $V'$ is not a subscheme of $B_i$ but $V'|_Z$ is a subscheme of $B_i|_Z$. Indeed, the condition on $B_i$ is satisfied if and only if the transforms of $B_i$ and $B_i|_Z$ under $f$ and $\tilf$ are computed using different cases. However, this bad situation cannot occur whenever $V\into Z$, so we at least have the following lemma.

\begin{lem}\label{reslem}
Let $X$ be a scheme with a closed subscheme $Z$ and assume that $f\:(X',B')\longto(X,B)$ is a $B$-blow up sequence whose centers are closed subschemes of the strict transforms of $Z$. Let $\tilB=B|_Z$ and let $\tilf\:Z'\longto Z$ denote the induced blow up sequence of strict transforms. Then $f^\rhd(B)|_{Z'}=\tilf^\rhd(\tilB)$ and $f^\circ(B)|_{Z'}=\tilf^\circ(\tilB)$. In particular, $(Z',B'|_{Z'})\longto (Z,B|_Z)$ is a $B$-blow up sequence which will be denoted $f|_Z$.
\end{lem}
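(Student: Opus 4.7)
The plan is to proceed by induction on the length $n$ of the blow up sequence, relying almost entirely on the discussion in the paragraph immediately preceding the lemma. The whole point of the hypothesis is to rule out exactly the one obstruction identified there: that some connected component $V'$ of the center satisfies $V'\not\subseteq B_i$ but $V'|_Z\subseteq B_i|_Z$.

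For the base case $n=1$, I would write $f\colon X'=\Bl_V(X)\to X$ with $V\subseteq Z$. The standard fact (\cite[\S1]{Con1}) gives that the strict transform $Z'=f^!(Z)$ equals $\Bl_V(Z)$ and that $\tilf\colon Z'\to Z$ is the corresponding blow up, so the exceptional divisor satisfies $E_f|_{Z'}=E_{\tilf}$. For each boundary component $B_i\in B$, the decomposition $V=U\sqcup W$ used to define $f^\rhd(B_i)$ depends on which connected components of $V$ are closed subschemes of $B_i$. Since $V\subseteq Z$, every connected component $V'$ of $V$ lies in $Z$, and hence the inclusion $V'\subseteq B_i$ is equivalent to $V'\subseteq B_i|_Z=\tilB_i$. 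Therefore the decomposition $V=U\sqcup W$ is the same whether computed with respect to $B_i$ on $X$ or with respect to $\tilB_i$ on $Z$, and the identity $f^*(B_i)|_{Z'}=\tilf^*(\tilB_i)$ combined with the compatibility of the exceptional divisor gives $f^\rhd(B_i)|_{Z'}=\tilf^\rhd(\tilB_i)$. Taking the ordered union with the exceptional divisor yields $f^\circ(B)|_{Z'}=\tilf^\circ(\tilB)$.

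For the inductive step, I would factor $f$ as $f_1\colon X_1\to X$ (blow up of the first center $V_1\subseteq Z$) followed by a blow up sequence $f_{>1}\colon X'\longto X_1$ of length $n-1$. By the base case applied to $f_1$, the restriction $f_1|_Z\colon Z_1\to Z$ is a blow up sequence of strict transforms, and $B_1:=f_1^\circ(B)$ restricts to $\tilf_1^\circ(\tilB)$ on $Z_1$. The hypothesis of the lemma guarantees that each center of $f_{>1}$ is a closed subscheme of the strict transform of $Z_1$ in $X_1$, so the inductive hypothesis applies to $f_{>1}$ with initial boundary $B_1$ and closed subscheme $Z_1$. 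Composing the two equalities and using that both principal and complete transforms were defined iteratively, one obtains $f^\rhd(B)|_{Z'}=\tilf^\rhd(\tilB)$ and $f^\circ(B)|_{Z'}=\tilf^\circ(\tilB)$.

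The only mild subtlety — which is really the force of the lemma — is that after the first blow up the \emph{strict} transform of $Z$ (rather than some other transform) is the correct ambient closed subscheme for applying the inductive hypothesis; this is ensured by the lemma's hypothesis being phrased in terms of strict transforms, and it is verified at each step by the base case. Once both equalities are established, the final sentence of the lemma is automatic: $B'|_{Z'}=f^\circ(B)|_{Z'}=\tilf^\circ(\tilB|_Z)$, so $(Z',B'|_{Z'})\to (Z,B|_Z)$ is indeed the $B$-blow up sequence determined by $\tilf$.
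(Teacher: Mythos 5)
Your proof is correct and follows essentially the same route as the paper: the paper states this lemma without a separate proof, deriving it from the immediately preceding discussion, which identifies the unique failure mode (a component $V'$ of the center with $V'\not\subseteq B_i$ but $V'|_Z\subseteq B_i|_Z$) and observes that it cannot occur when $V\into Z$. You have simply organized that same observation into an explicit induction on the length of the sequence, with the base case carried out componentwise via the decomposition $V=U\sqcup W$; this is a faithful expansion of the intended argument.
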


\subsection{Permissible $B$-blow up sequences}

\subsubsection{Transversality to the boundary}
Given a $B$-scheme $(X,B)$ and a closed subscheme $Z\into X$ we say that $Z$ is {\em transversal to the boundary} if $(Z,B|_Z)$ is a regular $B$-scheme. A more traditional way to formulate this condition is to say that each scheme $Z\times_X B_{i_1}\times_X\dots B_{i_n}$ is either empty or regular of codimension $n$ in $Z$. In particular, taking $n=0$ we see that $Z$ itself is regular. In the important particular case when $(X,B)$ is regular this just means that $Z$ is regular and transversal to the snc divisor $|B|$ in the usual sense.

\subsubsection{Simple normal crossings with the boundary}
More generally, we say that $Z$ has {\em simple normal crossings with the boundary} if $(Z,B|_Z)$ is a semi-regular $B$-scheme. As earlier, in the case when $(X,B)$ is regular this reduces to the usual notion of being regular and having simple normal crossings with an snc divisor.

\subsubsection{Permissibility}
A $B$-blow up $(X',B')\to(X,B)$ is called {\em permissible} if its center $V\into X$ has simple normal crossings with the boundary $B$. More generally, a $B$-blow up sequence is {\em permissible} if all its $B$-blow ups are permissible.

\begin{rem}
(i) Even when $B$ is empty the permissibility condition is stronger than just blowing up along regular centers. Namely, the first center just has to be regular, but after that a non-empty boundary appears, and this imposes an additional restriction on the choice of further centers.

(ii) For the sake of giving an inductive proof, even when interested in embedded desingularization without boundaries one often has to treat the exceptional divisors with a certain respect, and practically this amounts to considering only permissible blow up sequences. In particular, the known embedded desingularization algorithms for varieties construct permissible blow up sequences.
\end{rem}

\begin{lem}
If $(X,B)$ is a regular $B$-scheme with the boundary $B=\{B_1\.B_n\}$ and $(X',B')\longto(X,B)$ is a permissible $B$-blow up sequence then $(X',B')$ is regular and $f^\rhd(B)=\{f^!(B_1)\.f^!(B_n)\}$.
\end{lem}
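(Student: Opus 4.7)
The plan is to proceed by induction on the length of the $B$-blow up sequence, reducing to the case of a single permissible $B$-blow up $f \colon (X',B') \to (X,B)$ with regular center $V$ having snc with $B$. The induction step will be immediate once the single-step case is settled: $(X',B')$ would again be regular, permissibility of the remainder of the sequence carries over, and both $f^\rhd$ and $f^!$ are componentwise and compose in the obvious way, so Lemma \ref{sumlem}(ii) will assemble the conclusion for the full sequence.

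For a single blow up, both assertions are local on $X$, so I would fix a point $x \in V$ and use the snc hypothesis on $B$ together with semi-regularity of $(V, B|_V)$ to choose regular parameters $t_1, \dots, t_d$ of $\calO_{X,x}$ such that $V = V(t_1, \dots, t_k)$, each $B_i$ through $x$ equals $V(t_{\sigma(i)})$ with pairwise distinct $\sigma(i) \in \{1,\dots,d\}$, and $\sigma(i) \le k$ precisely when the local branch of $V$ at $x$ is contained in $B_i$. On each of the $k$ standard charts of $\Bl_V(X)$ the exceptional divisor $E$ is a single coordinate hyperplane coinciding with $f^*(V)$, and one-line substitutions in the new coordinates compute $f^*(B_i)$, $f^\rhd(B_i)$, and $f^!(B_i)$ as explicit locally principal subschemes. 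The two cases $\sigma(i) \le k$ and $\sigma(i) > k$ each directly verify $f^\rhd(B_i) = f^!(B_i)$: in the first case $f^*(B_i)$ equals either $E$ or $E$ plus a second coordinate hyperplane, and subtracting $f^*(V) = E$ gives the strict transform; in the second case $V$ is locally transversal to $B_i$, so all three transforms coincide. Collecting the components of $f^\circ(B) = f^\rhd(B) \sqord E$ on a chart yields a family of coordinate hyperplanes with pairwise distinct indices (the old $\sigma(i) \neq l$ together with the exceptional index $l$), so $B'$ is snc at each point of the chart by Remark \ref{sncrem}, and $(X',B')$ is regular.

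The only genuine subtlety is that the principal transform is defined globally: one must decide, connected component by connected component of $V$, whether that component sits inside $B_i$. The snc hypothesis makes this global dichotomy collapse to the purely local condition $\sigma(i) \le k$ at $x$, which is why the two cases in the definition of $f^\rhd$ admit a uniform coordinate description and the coincidence with $f^!$ falls out transparently. Modulo this bookkeeping, the argument reduces to a routine chart-by-chart calculation, and no further input beyond Lemma \ref{sumlem} is required.
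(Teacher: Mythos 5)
Your proposal is correct and follows essentially the same route as the paper's proof: reduce to a single blow up and verify $f^\rhd(B_i)=f^!(B_i)$ by the same two-case dichotomy (a component of the center contained in $B_i$ versus transversal to it), then conclude that $f^!(B)\sqord E_f$ is snc. The paper dispatches the computation and the matching of the global containment condition in the definition of $f^\rhd$ with ``it is easy to see'' / ``it is well known,'' so your explicit coordinate normal form, chart calculation, and the observation that regularity plus connectedness of the center's components collapses the global dichotomy to a local one are simply a fuller rendering of the same argument.
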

\begin{proof}
We can assume that $f$ is a single blow up along $V$. Let $B_i$ be a component of $B$ and $V_0$ a connected component of $V$. It is easy to see that both in the case when $V_0$ is transversal to $B_i$ and in the case when $V_0$ is contained in $B_i$ one has that $f^\rhd(B_i)=f^!(B_i)$ in a neighborhood of $f^*(V_0)$. So, $f^\circ(B)=\{f^!(B_1)\.f^!(B_n),E_f\}$ and it is well known that the latter is an snc divisor whenever $V$ has simple normal crossings with $B$.
\end{proof}

\subsubsection{$B$-permissibility}
Often it will be convenient to express the permissibility in terms of usual blow up sequences and the initial boundary $B$. So, given a $B$-scheme $(X,B)$ we say that a blow up sequence $X'\longto X$ is {\em $B$-permissible} if the induced $B$-blow up sequence $(X',B')\longto(X,B)$ is permissible.

\subsubsection{Pushforward and restriction}
We refer to \cite[\S4.2.1]{nemb} for the definition of the pushforward of a blow up sequence with respect to a closed immersion.

\begin{lem}\label{permisslem}
Assume that $X$ is a scheme with a boundary $B$ and closed subscheme $i\:\tilX\into X$, and set $\tilB=B|_\tilX$.

(i) If $\tilf\:\tilX'\longto\tilX$ is a $\tilB$-permissible blow up sequence then the pushforward $f=i_*(\tilf)$ is a $B$-permissible blow up sequence.

(ii) If $f\:X'\longto X$ is a $B$-permissible blow up sequence whose centers are contained in the strict transforms of $\tilX$ then the induced blow up sequence of strict transforms $\tilf\:f^!(\tilX)\longto\tilX$ (see \cite[\S2.2.7]{nemb}) is $\tilB$-permissible.
\end{lem}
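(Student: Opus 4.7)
The plan is to argue both parts by induction on the length of the blow up sequence, using two elementary observations. The first is that permissibility of a blow up along a center $V$ is a property of the pair consisting of $V$ and the boundary restricted to $V$; hence if $V\subset\tilX_k\subset X_k$, then since $B_k|_V=\tilB_k|_V$, permissibility of the step as a blow up of $X_k$ along $V$ is equivalent to permissibility of the step as a blow up of $\tilX_k$ along $V$. The second observation is Lemma \ref{reslem} itself, which, under the hypothesis that centers are contained in the strict transforms of $\tilX$, identifies $f_k^\circ(B)|_{\tilX_k}=\tilf_k^\circ(\tilB)$ at every truncation $k$.

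For part (i), I would first define $f=i_*(\tilf)$ inductively: assuming $f_k$ has been constructed with strict transform of $\tilX$ at stage $k$ canonically identified with $\tilX_k$, blow up $X_k$ along the $k$-th center $\tilV_k$ of $\tilf$, viewed as a closed subscheme of $X_k$ via $\tilX_k\subset X_k$. The chart-level description of strict transforms recalled just before Lemma \ref{reslem} (namely $f^!(\tilX)=\Bl_V(\tilX)$ whenever $V\subset\tilX$) ensures that the strict transform of $\tilX$ in $X_{k+1}$ is consistently identified with $\tilX_{k+1}=\Bl_{\tilV_k}(\tilX_k)$, which in particular guarantees that the hypothesis of Lemma \ref{reslem} holds for $f$. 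The induction step is then clean: inductive $B$-permissibility of $f_k$ plus Lemma \ref{reslem} yield $f_k^\circ(B)|_{\tilX_k}=\tilf_k^\circ(\tilB)$, and permissibility of the $(k+1)$-st step of $f$ follows from the $\tilB$-permissibility of the corresponding step of $\tilf$ by the first observation.

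For part (ii), the hypothesis is precisely the setup of Lemma \ref{reslem}, so $\tilf$ is already a well-defined blow up sequence $f^!(\tilX)\to\tilX$ whose $k$-th center $V_k$ lies in $\tilX_k$ and whose boundary satisfies $\tilB_k=\tilf_k^\circ(\tilB)=f_k^\circ(B)|_{\tilX_k}$. The same observation on restricted boundaries, now applied in the reverse direction, converts the $B$-permissibility of each step of $f$ (semi-regularity of $(V_k,B_k|_{V_k})$) into the $\tilB$-permissibility of the corresponding step of $\tilf$ (semi-regularity of $(V_k,\tilB_k|_{V_k})$).

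I do not anticipate a serious obstacle: the substantive content—identifying the complete transform under $f$ with the complete transform under $\tilf$ after restriction—is already packaged into Lemma \ref{reslem}, and the local description of strict transforms of closed subschemes under blow ups takes care of the pushforward in (i). What remains is only the bookkeeping verification that permissibility, being intrinsic to the pair $(V,\text{restricted boundary})$, is compatible with passage to and from a closed subscheme containing all blow up centers.
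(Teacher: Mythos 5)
Your proof is correct and follows essentially the same route as the paper's: induction on the length of the sequence, Lemma \ref{reslem} to identify the transformed boundaries after restriction to the strict transforms of $\tilX$, and the observation that permissibility of a blow up along $V$ is intrinsic to the pair $(V,B|_V)$, which is unaffected by passing between $X$ and $\tilX$ because $B_k|_V=\tilB_k|_V$ once $V\into\tilX_k$. The paper phrases this last point in terms of each connected component of $V$ being contained in, or transversal to, the corresponding boundary components of $B$ and $\tilB$ simultaneously, but the substance is identical to your ``first observation.''
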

\begin{proof}
Note that both in (i) and (ii) the centers of $f$ lie on the strict transforms of $\tilX$, and hence the transforms of the boundaries in the blow up sequences $f$ and $\tilf$ are compatible with the restriction by Lemma \ref{reslem}. Now, an obvious induction on the length of $f$ reduces the proof of both (i) and (ii) to the claim that if both $\tilf$ and $f$ are a single blow up along $V\into\tilX$ then either $f$ is $B$-permissible and $\tilf$ is $\tilB$-permissible or both blow ups are not permissible. But the latter is clear since $V\into\tilX$ and hence $B_i|_V=\tilB_i|_V$.
\end{proof}

\subsection{Desingularization of $B$-schemes}

\subsubsection{Desingularization of a $B$-scheme}
By {\em desingularization} of a $B$-scheme $(X,B)$ we mean a $B$-blow up sequence $f\:(X',B')\longto(X,B)$ such that the $B$-scheme $(X',B')$ is semi-regular and the centers of $f$ are disjoint from the preimages of $(X,B)_\sreg$. If, in addition, the centers of $f$ are regular (resp. $B$-permissible) then we say that the desingularization is {\em strong} (resp. {\em $B$-strong}).

\begin{rem}\label{desrem}
(i) A desingularization $f\:(X',B')\longto(X,B)$ can be extended in an obvious way to a $B$-blow up sequence that modifies the non-regular locus of $(X,B)$ and produces a regular $B$-scheme $(X'',B'')$: one just has to kill the components of $X'$ contained in $|B|$ by blowing them up.

(ii) Also, it is easy to produce from $f$ a $B$-blow up sequence $g\:(X'',B'')\to(X,B)$ such that $(X'',B'')$ is semi-regular, $g^\rhd(B)=\emptyset$ and the centers of $g$ sit over $|B|\cup X_\sing$. This is done by blowing up the strata of $B'$ starting with the smallest ones. Namely, we blow up $B'_I$ at the first stage. This resolves the strict transform of $\cup_{i\in I}B'_{I\setminus\{i\}}$, so we can blow it up at the second stage and proceed similarly until the strict transform of $\cup_{i\in I}B'_i$ is blown up at the last stage. The new sequence $g$, which is obtained by extending $f$ in this way, is as required.
\end{rem}

\subsubsection{Functoriality}
Let $\gtC$ be a category whose objects are certain $B$-schemes and whose morphisms are certain strict regular morphisms. Then a {\em functorial desingularization} on $\gtC$ is a rule $\calF$ which associates a desingularization $\calF(X,B)\:(X',B')\longto(X,B)$ to each $B$-scheme $(X,B)$ from $\gtC$ in a way {\em compatible} with the morphisms of $\gtC$. The latter means that for each $h\:(\oX,\oB)\to(X,B)$ in $\gtC$ the $B$-blow up sequence $\calF(\oX,\oB)$ is obtained from $h^*(\calF(X,B))$ by omitting all trivial $B$-blow ups.

\subsubsection{The case of varieties}
Let $\calF_\Var$ be the non-embedded desingularization functor from \cite[Theorem~6.1]{BMT}. Recall that it associates to a variety $X$ of characteristic zero a strong desingularization $\calF(X)\:X'\longto X$ and is compatible with all regular morphisms. Moreover, the addendum to \cite[Theorem~6.1]{BMT} asserts that the associated $B$-blow up sequence $(X',B')\longto(X,\emptyset)$ is a $B$-strong desingularization. In other words this can be formulated as follows.

\begin{theor}\label{bvardesth}
There exists functorial $B$-strong desingularization $\calF_\Var$ on the category $\BVar_{B=\emptyset}$ whose objects are finite disjoint unions of $B$-varieties of characteristic zero with empty boundary and whose morphisms are all regular morphisms.
\end{theor}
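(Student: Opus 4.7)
The plan is to simply translate \cite[Th.~6.1]{bmt} and its addendum into the language of $B$-schemes developed in \S\ref{bsec}. For a generically reduced variety $X$ of characteristic zero, \cite[Th.~6.1]{bmt} produces a blow up sequence $f=\calF_\Var(X):X'\longto X$ with regular centers $V_0,\dots,V_{n-1}$, disjoint from the preimages of the regular locus $X_\reg=(X,\emptyset)_\sreg$, such that $X'$ is regular; and this assignment is compatible with arbitrary regular morphisms $\oX\to X$ by omitting empty blow ups. Because $B=\emptyset$, semi-regularity of $(X',B')$ with $B'=f^\circ(\emptyset)=E_f$ means exactly that $E_f$ is an snc boundary on the regular scheme $X'$, so the first task is to verify that $f^\circ(\emptyset)$ is snc.

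This is precisely the content of the addendum to \cite[Th.~6.1]{bmt}: at every stage $i$ the center $V_i\into X_i$ has simple normal crossings with the accumulated exceptional divisor $|E_i|$ and is itself regular. In our terminology from \S\ref{bschsec} this translates to the statement that $V_i$ is transversal to the boundary $E_i=f_{\leq i}^\circ(\emptyset)$, i.e.\ the induced $B$-blow up sequence $(X_i,E_i)\to(X,\emptyset)$ is permissible at stage $i$. Since a permissible $B$-blow up of a regular $B$-scheme is again regular (by the Lemma preceding \S\ref{sumsec}), an obvious induction gives that $(X',B')$ is regular and in particular semi-regular, and simultaneously that the centers are $B$-permissible. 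Combined with the centers being regular, this yields a $B$-strong desingularization of $(X,\emptyset)$.

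To handle finite disjoint unions $X=\sqcup_\alpha X_\alpha$, I would apply $\calF_\Var$ to each component $X_\alpha$ and interleave the resulting sequences in any canonical order (for example, by stage number followed by the index $\alpha$). Since all the transforms commute with disjoint union and the non-snc locus, semi-regular locus, etc.\ are computed component by component, the resulting blow up sequence remains a $B$-strong desingularization. Functoriality in exact regular morphisms then follows from the functoriality of \cite[Th.~6.1]{bmt} in regular morphisms of varieties, together with the observation that exactness is automatic when $B=\emptyset$ on the source.

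There is no serious obstacle: the theorem is essentially a restatement of \cite[Th.~6.1]{bmt} once one checks that the addendum's transversality-to-the-accumulated-exceptional-divisor condition is literally our notion of permissibility for $B$-schemes. This check is immediate from Remark~\ref{sncrem} and the definitions of snc boundary and of permissible $B$-blow up.
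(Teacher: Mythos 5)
Your overall route is the same as the paper's: Theorem \ref{bvardesth} is presented there as a pure reformulation of \cite[Th.~6.1]{bmt} together with its addendum, with no proof beyond that citation. Your translation of the addendum into the language of \S\ref{bsec}, the remark that exactness is vacuous when all boundaries are empty, and the treatment of finite disjoint unions are all consistent with that.

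However, one step of your elaboration fails as written. You deduce that $(X',B')$ is regular by induction, using the Lemma that a permissible $B$-blow up sequence with \emph{regular} source $(X,B)$ has regular target. In the non-embedded setting the base of that induction is false: $(X_0,\emptyset)=(X,\emptyset)$ is not a regular $B$-scheme, since $X$ is the singular variety being resolved, and the intermediate $X_i$ remain singular until the final blow up, so no intermediate $(X_i,E_i)$ satisfies the Lemma's hypothesis. Over a singular ambient scheme, permissibility of the centers alone does not guarantee that the accumulated exceptional boundary ends up snc (the exceptional divisor of a blow up along a regular, boundary-transversal center in a singular scheme need not behave as in the regular case). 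The snc-ness of the final boundary must therefore be taken directly from the addendum to \cite[Th.~6.1]{bmt}, which asserts it outright --- this is exactly how the paper reads the addendum, namely as stating that the associated $B$-blow up sequence $(X',B')\longto(X,\emptyset)$ \emph{is} a $B$-strong desingularization. So the theorem and your citation are fine, but your claimed derivation of semi-regularity of $(X',B')$ from permissibility of the centers should be replaced by a direct appeal to that part of the cited addendum.
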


\subsection{Formal and analytic analogs}
All definitions concerning boundaries, $B$-schemes, $B$-blow ups, desingularization of $B$-schemes and functoriality apply almost verbatim to the contexts of qe formal schemes and complex or non-archimedean analytic spaces. Note that semi-regularity is preserved by the following functors: (1) formal completion of qe $S$-schemes along a fixed ideal on a base scheme $S$, (2) analytification of $k$-varieties where $k$ is a complete field (either archimedean or non-archimedean). In particular, it follows that the completion and analytification functors take desingularizations of $B$-schemes to desingularizations of formal $B$-schemes or analytic $B$-spaces.

\section{Desingularization of qe $B$-schemes of characteristic zero}\label{dessec}

\subsection{$B$-Varieties}\label{bvarsec}
Let $\BVar$ be the category of finite disjoint unions of $B$-varieties of characteristic zero with all regular strict morphisms between them.

\begin{rem}\label{BVarrem}
One can extend the argument from \cite[Addendum~6.1]{BMT} to the case of a non-empty initial boundary obtaining a $B$-strong desingularization on $\BVar$. However, this requires a serious adjustment and will be done elsewhere. In this paper we will only use $\calF_\Var$ as a black box to construct a strong desingularization on $\BVar$, and this desingularization is not $B$-strong.
\end{rem}

\begin{theor}\label{bvarth}
There exists a strong desingularization functor $\calF_\BVar$ on the category $\BVar$.
\end{theor}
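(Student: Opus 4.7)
The plan is to construct $\calF_\BVar(X,B)$ as a composition of three blow-up sequences. First, forget $B$ and apply the non-embedded desingularization functor $\calF_\Var$ from Theorem~\ref{bvardesth} to $X$, obtaining $g_1\colon X_1\longto X$ with $X_1$ regular and the exceptional boundary $E_{g_1}$ snc (the $B$-strong addendum). Since the centers of $g_1$ lie in $X_\sing\subseteq(X,B)_\ssing$ and $g_1$ is an isomorphism over $X_\reg\supseteq(X,B)_\sreg$, the complete transform $B_1 = g_1^\circ(B) = g_1^\rhd(B)\sqord E_{g_1}$ restricts to $B$ over the preimage of $(X,B)_\sreg$, so $(X_1,B_1)$ is semi-regular there. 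It remains to turn the combined boundary $B_1$ into an snc one without disturbing this locus.

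Before appealing to the classical embedded desingularization, I would insert an intermediate blow-up sequence $g_2\colon X_2\longto X_1$ that separates $g_1^\rhd(B)$ from the strata of $E_{g_1}$ along which it fails to be snc. Using the prescribed ordering on $B$, one canonically blows up, in order, the non-snc meetings of components of $g_1^\rhd(B)$ with strata of $E_{g_1}$; each such center is regular because it is cut out on the regular ambient $X_1$ by a transverse collection of snc strata, and each lies in $(X_1,B_1)_\ssing$, so the semi-regular locus is preserved. This safeguard is what prevents the embedded algorithm of the final stage from destroying already-snc configurations and from producing merely permissible (rather than regular) centers.

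In the final stage, apply the classical functorial embedded desingularization of divisors on regular varieties, available from \cite[Th.~6.1]{bmt}, to the divisorial part of $g_2^\rhd g_1^\rhd(B)$ on $X_2$, treating $g_2^\circ(E_{g_1})$ as a fixed snc reference boundary that all centers must meet simply; this yields $g_3\colon X_3\longto X_2$ with regular centers in the non-snc locus of the combined boundary, after which $(X_3, g_3^\circ g_2^\circ(B_1))$ is semi-regular. Setting $\calF_\BVar(X,B) := g_3\circ g_2\circ g_1$ gives the desired strong desingularization, and functoriality in exact regular morphisms is inherited stagewise from the functoriality of $\calF_\Var$, the canonical nature of the intermediate step (built only from the ordered divisorial data of $(X_1,B_1)$), and the functoriality of the classical embedded algorithm; exactness is precisely what makes the indexing of $B$ compatible under pullback. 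The main technical obstacle is to verify that the intermediate step produces genuinely regular centers inside $(X,B)_\ssing$ (so that the combined procedure is \emph{strong}, not merely $B$-strong) and that no fresh non-snc interactions with $E_{g_1}$ are created in the third stage.
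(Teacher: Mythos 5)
Your overall architecture (non-embedded desingularization, then an intermediate ``separation'' stage, then classical embedded resolution of the boundary) matches the paper's three-step proof, and Stage 1 is essentially the paper's Step 1. But your Stage 2 contains a genuine gap, and it is exactly the point on which the whole theorem turns. You propose to blow up ``the non-snc meetings of components of $g_1^\rhd(B)$ with strata of $E_{g_1}$'' and assert these centers are regular ``because cut out by a transverse collection of snc strata.'' This is not true: the components $g_1^\rhd(B_i)$ are arbitrary (possibly badly singular) divisors on the regular $X_1$, so their non-snc loci and their intersections with the exceptional strata are arbitrary closed subsets, not regular subschemes cut out transversally. There is no canonical regular center to blow up here, and no argument that finitely many such blow-ups achieve any separation. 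The paper's actual Step 2 is quite different and is where the real work happens: it runs an induction on $\dim X$, takes $\tilX$ to be the reduction of $B_1$, applies the (lower-dimensional) functor $\calF_\BVar(\tilX,(B\setminus\{B_1\})|_\tilX)$, pushes the resulting sequence forward along $\tilX\into X$, and then uses an explicit chart computation (Lemma \ref{chartlem}) to split the exceptional part $Y'$ off from $f^\rhd(B_1)$ so that the strict transform $B_1^!$ becomes a whole connected component of the principal transform, disjoint from the bad locus. Your proposal contains no induction on dimension and no mechanism playing this role.

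The gap propagates into Stage 3. Because your Stage 2 does not actually separate the strict transforms $B^!$ from the bad locus, your Stage 3 applies embedded resolution to the entire divisorial old boundary $g_2^\rhd g_1^\rhd(B)$. That is precisely the ``straightforward attempt'' the paper's proof warns against: it modifies the whole of $|B|$ rather than only $(X,B)_\ssing$, so the resulting centers are not disjoint from the preimage of the semi-regular locus and the output fails condition (ii) of the definition of a desingularization. The paper avoids this by resolving, in its Step 3, only the marked ideal $(X',X',B^!,D',1)$ with $D'=[B']-[B^!]$, which is supported on the bad locus once Step 2 has been carried out. To repair your argument you would need to replace your Stage 2 by the induction-on-dimension separation argument (or an equivalent), after which your Stage 3 must be restricted to the bad-locus-supported difference divisor rather than the full old boundary.
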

\begin{proof}
We will construct a strong desingularization $\calF_\BVar(X,B)$ of an object $(X,B)$ from $\BVar$ in three steps $\calF_i(X_i,B_i)\:(X_{i+1},B_{i+1})\longto(X_i,B_i)$, where $(X,B)=(X_0,B_0)$ and $0\le i\le 2$. In fact, $\calF_0$ resolves $X$, $\calF_1$ resolves an ``outer" part of the boundary, and $\calF_2$ completes the job. Functoriality will be clear from the construction.

Let $B=\{H_1\. H_n\}$ and $Z=(X,B)_\ssing$. We will only use $B$-blow up sequence whose centers are regular and {\em $Z$-supported}, i.e. contained in the preimage of $Z$. Such sequences will be called admissible throughout the proof. Also, we will use induction on $d=\dim(X)$. If $d=0$ then we define $\calF_\BVar(X,B)$ to be the blow up along $Z$. So, assume that $d>0$.

Step 0. {\it There exists a functorial admissible $B$-blow up sequence $\calF_0\:(X_1,B_1)\longto(X,B)$ such that $X_1$ is regular.} The strong non-embedded desingularization functor from \cite[Theorem 6.1]{BMT} applied to $X$ produces a strong desingularization $X_1\longto X$. Let $\calF_0(X,B)$ be the corresponding $B$-blow up sequence.

Assumption 1. {\it We can assume in the sequel that $X$ is regular and $B$ is divisorial.} After step 0, we should resolve $(X_1,B_1)$ with a regular $X_1$. To simplify notation, we replace $(X,B)$ by $(X_1,B_1)$ and assume in the sequel that $X$ is regular. In addition, by the regularity of $X$, each component of $B$ decomposes as $H_i=H^\div_i\coprod H^\comp_i$ where $H^\div_i$ is a Cartier divisor and $H^\comp_i$ is a component of $X$ in the sense of \S\ref{convsec}. Let $B^\div$ denote the boundary $\{H^\div_1\.H^\div_n\}$. Clearly, any strong desingularization $X'\longto X$ of $(X,B^\div)$ is also a strong desingularization of $(X,B)$ and hence we can replace $B$ by $B^\div$, making it divisorial.

Since $X$ is regular, each $H_i$ splits as $H^\out_i+H^\inn_i$, where $H^\inn_i$ is a $Z$-supported divisor and any irreducible component of the divisor $H^\out_i$ is not $Z$-supported. We call $B^\out=\{H^\out_1\.H^\out_n\}$ and $B^\inn=\{H^\inn_1\.H^\inn_{n}\}$ the outer and inner parts of $B$. Dealing with $B^\out$ will require a special care. Given an admissible $B$-blow up sequence $f\:(X',B')\longto(X,B)$ we will use the notation $B'=\{H'_1\.H'_{n'}\}$. In particular, $H'_i:=f^\rhd(H_i)$ for $1\le i\le n$ and $B'=\{H'_1\. H'_n\}\sqord E_f$, where $E_f$ is the boundary of $f$. Since $X$ and the centers of $f$ are regular, $X'$ is regular and we can split $B'$ as the sum of $Z$-supported and completely not $Z$-supported parts $B'^\inn$ and $B'^\out$. Note that $H'^\out_i=f^!(H^\out_i)$ for $1\le i\le n$ and $H'^\out_i=\emptyset$ for $n<i\le n'$.

Step 1. {\it There exists a functorial admissible $B$-blow up sequence $\calF_1\:(X_2,B_2)\longto(X,B)$ such that $B_2^\out$ is snc.} We will compose $n$ sequences. For a number $l$ with $1\le l\le n$ define a $B$-blow up sequence $g_l(X,B)$ as follows. Set $\tilX=H^\out_l$, $C=\{H^\out_{l+1}\.H^\out_n\}$ and $\tilC=C|_\tilX$. By induction on the dimension, there exists a strong desingularization $\tilf\:(\tilX',\tilC')\longto(\tilX,\tilC)$. Let $f\:X'\longto X$ be the pushforward of $\tilX'\longto\tilX$, and let $g_l\:(X',B')\longto(X,B)$ and $(X',C')\longto(X,C)$ be the corresponding $B$-blow up sequences.

Note that $g_l$ is $\tilZ$-supported, where $\tilZ=(\tilX,\tilC)_\sing$, and $\tilZ\subseteq(X,C)_\sing$ by Lemma~\ref{locuslem}. In particular, $g_l$ is admissible. Note also that $H'^\out_l=f^!(H^\out_l)=\tilX'$ and $H'^\out_i\into f^\rhd(H^\out_l)=C'_i$ for $l<i\le n$. Since $C'|_{\tilX'}=\tilC'$ by Lemma~\ref{reslem}, we obtain from Lemma~\ref{locuslem} that the boundary $\{H'^\out_l,C'_2\.C'_n\}$ is snc in a neighborhood $U$ of $H'^\out_l$. Therefore the smaller boundary $\{H'^\out_l\.H'^\out_n\}$ is also snc in $U$.

We define $\calF_1$ as the composition of the sequences $g_1(X,B)\:(X',B')\longto(X,B)$, $g_2(X',B')\:(X'',B'')\longto(X',B')$, etc. By the properties of $g_l$ we then have that $B'^\out$ is snc in a neighborhood $U$ of $H'^\out_1$. Furthermore, $g_2$ only modifies the non-snc locus of $\{H'^\out_2\.H'^\out_n\}$ and hence is an isomorphism over $U$. Therefore, $B''^\out$ is snc in a neighborhood of $H''^\out_1\cup H''^\out_2$, etc. Composing $g_1\.g_n$ we resolve the whole outer boundary.

Assumption 2. {\it We can assume in addition that $B^\out$ is snc.} Indeed, after Step 1 it we should resolve $(X_2,B_2)$ with an snc $B_2^\out$. So we replace $(X,B)$ with $(X_2,B_2)$, for shortness.

Step 2. {\it Resolution of $B$.} Note that the divisor $D=[B^\inn]$ is $Z$-supported. Since $B^\out$ is snc, we can consider the marked ideal $\ucalI=(X,B^\out,X,\calI_{D},1)$, where marked ideals are as defined in \cite[\S5]{BMT}. Let $f\:X'\longto X$ be the resolution of $\ucalI$. We claim that the associated $B$-blow up sequence $\calF_2(X,B)\:(X',B')\to(X,B)$ is a strong desingularization of $(X,B)$.

First, $f$ depends functorially on $(X,B)$ and is $D$-supported and hence $Z$-supported. Since $X$ is regular and $f$ blows up regular centers, $X'$ is regular. It remains to show that $f^\circ(B)$ is snc. Note that $f^\rhd(D)=\emptyset$ by Lemma \ref{sumlem}(vi). In addition, if $E_f$ denotes the boundary of $f$ then $f^\rhd(B^\out)\sqord E_f$ is snc because $f$ is $B^\out$-permissible. Since $H^\inn_i\into D$ for $1\le i\le n$, we have that $f^\rhd(H^\inn_i)\into f^\rhd(D)=\emptyset$, and hence $f^\rhd(H_i)=g^\rhd(H^\out_i)$. Therefore, $f^\circ(B)=f^\rhd(B^\out)\sqord E_f$ is snc.
\end{proof}

\subsection{Formal $B$-varieties with small singular locus}\label{formvarsec}

\subsubsection{Locally principal formal $B$-schemes}
By a {\em locally principal formal $B$-scheme} we mean a triple $(\gtX,\gtB,\gtI)$ where $(\gtX,\gtB)$ is a formal $B$-scheme and $\gtI$ is an invertible ideal of definition of $\gtX$. Note that $\gtZ=\Spf(\calO_\gtX/\gtI)$ is a closed formal subscheme with reduction $\gtX_s$. Sometimes we will replace $\gtI$ by $\gtZ$ in the triple. By a morphism $(\gtX',\gtB',\gtZ')\to(\gtX,\gtB,\gtZ)$ of such creatures we always mean a morphism of formal $B$-schemes $f\:(\gtX',\gtB')\to(\gtX,\gtB)$ such that $\gtZ'=f^*(\gtZ)$. Actually, we will only be interested in the cases when $f$ is either a strict regular morphism or a formal $B$-blow up sequence.

\begin{rem}
(i) Both $\gtZ$ and the components of $\gtB$ are locally principal closed formal subschemes. However, they transform differently under formal $B$-blow ups. This is the reason to distinguish $\gtZ$ rather than to include it as a special (e.g. minimal) component of the boundary.

(ii) The role of $\gtZ$ will be to control the bad locus of the formal $B$-scheme $(\gtX,\gtB)$.
\end{rem}

\subsubsection{Rig-regularity}\label{rigsec}
Recall that a qe formal scheme $\gtX$ is called {\em rig-regular} if its singular support $\gtX_\sing$ is given by an open ideal. In particular, a desingularization of such an $\gtX$ involves only blow ups along open ideals. Intuitively, rig-regularity means that the generic fiber of $\gtX$ is regular, and this makes perfect sense in cases it is defined. For example, when $\gtX$ is of finite type over a complete DVR $R$ and the generic fiber of $\gtX$ is defined as a rigid space over $\Frac(R)$ (hence the terminology).

\subsubsection{The category $\hatBVar_\smal$}\label{formcatsec}
We now introduce the category $\hatBVar_\smal$ whose objects are finite disjoint unions of certain locally principal formal varieties with small boundaries. More concretely, $(\gtX,\gtB,\gtI)$ is in $\hatBVar_\smal$ if $(\gtX,\gtB)$ is a finite disjoint union of formal $B$-varieties of characteristic zero, $\gtX$ is rig-regular and $\gtB$ is $\gtZ$-supported, i.e. all components of $\gtB$ are supported on the closed fiber $\gtX_s$. A morphism of $\hatBVar_\smal$ is a strict regular morphism between its objects.

\begin{rem}
(i) When $\gtB$ is empty we obtain the category $\hatVar_{p=0}$ from \cite[\S3.1]{nemb}.

(ii) Assume that $(\gtX,\gtB,\gtI)$ is an object of $\hatBVar_\smal$ and $(X,\calI)$ is an algebraization of $(\gtX,\gtI)$ in the sense of \cite[\S3.1]{nemb}. Since $\gtB$ is $\gtI$-supported, it algebraizes uniquely to an $\calI$-supported boundary $B$ on $X$ and $B_i=\gtB_i$ as schemes. In particular, this algebraization uniquely extends to an algebraization $(X,B,\calI)$ of the original triple. Moreover, all components of $B$ are closed subschemes in the $n$-th fibers $X_n=\Spec(\calO_X/\calI^n)$ for a large enough $n$. It follows that all results from \cite[\S3.2]{nemb} obviously generalize to the objects of $\hatBVar_\smal$. For example, an analog of \cite[Proposition~3.2.1]{nemb} holds true and the proof is almost identical with the only minor modification that one should always consider thick enough $n$-th fibers so that they contain $B$.
\end{rem}

\subsubsection{Desingularization on $\hatBVar_\smal$}\label{formalsec}
Now we are ready to generalize \cite[Theorem~3.1.5]{nemb} to formal $B$-schemes.

\begin{theor}\label{formdesth}
Let $\calF_\BVar$ be a desingularization functor on $\BVar$. Then there exists unique up to unique isomorphism desingularization functor $\hatcalF_\BVar$ on $\hatBVar_\smal$ such that $\hatcalF_\BVar$ is compatible with $\calF_\BVar$ under formal completions. Moreover, $\hatcalF_\BVar$ is strong (resp. $B$-strong) if and only if $\calF_\BVar$ is strong (resp. $B$-strong).
\end{theor}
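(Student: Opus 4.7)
The plan is to follow verbatim the strategy used for \cite[Th. 3.1.5]{nonemb}, exploiting the extended algebraization dictionary for $\hatBVar_\small$ mentioned in the remark right after the definition of that category. The key observation that makes the whole approach go through is that when $(\gtX,\gtB,\gtI)$ lies in $\hatBVar_\small$, the boundary $\gtB$ is $\gtI$-supported and $\gtX$ is rig-regular, so the bad locus of any algebraization $(X,B,\calI)$ lies inside $V(\calI)$.

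The construction of $\hatcalF_\BVar(\gtX,\gtB,\gtI)$ will proceed as follows. First, I would work Zariski-locally: cover $\gtX$ by affine formal opens admitting an algebraization $(X,B,\calI)$ in the sense of the analog of \cite[Prop. 3.2.1]{nonemb} for $B$-schemes. Apply $\calF_\BVar$ to $(X,B)$ to obtain a blow up sequence $f:(X',B')\longto(X,B)$. Because $\calF_\BVar$ is a desingularization, its centers lie over $(X,B)_\ssing$, and by the remark this set is contained in $V(\calI)$. Hence each center of $f$ is a closed subscheme of some $X_n=\Spec(\calO_X/\calI^n)$, so $f$ is an admissible blow up sequence and the completion functor produces a formal $B$-blow up sequence $\hat f:(\gtX',\gtB')\longto(\gtX,\gtB)$ in $\hatBVar_\small$. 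One sets $\hatcalF_\BVar(\gtX,\gtB,\gtI):=\hat f$ on that open piece.

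To see that these local constructions glue to a globally defined functor and satisfy the required compatibility and functoriality, I would invoke the functoriality of $\calF_\BVar$ along the standard lines of \cite[\S3.2]{nonemb}: any two algebraizations of the same formal $B$-scheme are related, after suitable refinement, by exact regular morphisms of algebraizations (the boundary part of the data poses no extra difficulty since components of $B$ live in some $X_n$ for $n$ large enough); functoriality of $\calF_\BVar$ then identifies the two outputs after completion. The same mechanism handles an arbitrary exact regular morphism $h:(\tilgtX,\tilgtB,\tilgtI)\to(\gtX,\gtB,\gtI)$ in $\hatBVar_\small$: locally lift $h$ to an exact regular morphism of algebraizations and transport the compatibility of $\calF_\BVar$ with regular morphisms. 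Uniqueness of $\hatcalF_\BVar$ up to unique isomorphism is automatic from the compatibility with formal completion, because every formal object is locally the completion of an algebraization and $\calF_\BVar$ is already fixed on the algebraic category. The preservation of the adjectives strong and $B$-strong is then a local statement on centers: regularity of a closed formal subscheme and the simple-normal-crossings condition with $\gtB$ are equivalent to the corresponding properties of the algebraic center in any algebraization, so $\hatcalF_\BVar$ inherits these properties from $\calF_\BVar$ and loses them only if $\calF_\BVar$ does.

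The only genuine technical point, and the part where one has to be slightly careful rather than quote \cite{nonemb} mechanically, is the extension of the algebraization package of \cite[\S3.2]{nonemb} to the category $\hatBVar_\small$. Concretely, one must verify that (a) an algebraization of $\gtX$ can be chosen so that its $n$-th fiber contains all the boundary components for $n$ large, (b) two algebraizations of $(\gtX,\gtB,\gtI)$ differ by an exact regular morphism after refinement, and (c) an exact regular morphism of formal $B$-schemes lifts, locally and after shrinking, to an exact regular morphism of algebraizations. Once these three statements are in hand, which amounts to keeping track of the $\gtI$-supported locally principal subschemes through the arguments of \cite[\S3.2]{nonemb} (and this is precisely what the remark asserts), the theorem follows exactly as in \cite[Th. 3.1.5]{nonemb}.
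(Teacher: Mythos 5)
Your proposal is correct and follows exactly the route the paper takes: the paper's own proof consists of the single observation that the argument of \cite[Th. 3.1.5]{nonemb} (via \cite[\S\S3.2--3.3]{nonemb}) carries over once one takes the $n$-th fibers thick enough to contain the boundary, which is precisely the content of your points (a)--(c) and of the remark you cite. Your write-up is merely a more explicit unwinding of the same algebraize--resolve--complete--glue mechanism, with no substantive deviation.
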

\begin{proof}
The argument repeats the proof of \cite[Theorem~3.1.5]{nemb}, as given in \cite[\S\S3.2-3.3]{nemb}, with the only minor modification that one should always consider thick enough $n$-th fibers so that they contain the boundary.
\end{proof}

\begin{rem}\label{gtzrem}
The constructed algorithm depends on $\gtZ$, in particular, we establish its functoriality only with respect to morphisms that respect $\gtZ$. It seems probable that known algorithms for varieties can be extended to formal varieties. If this is the case then the dependency on $\gtZ$ is only an artefact of our method that uses Elkik's algebraization theorem.
\end{rem}

\subsection{$B$-schemes with small singular locus} \label{smallsec}
Our next aim is to generalize \cite[Theorem~3.4.1]{nemb} to $B$-schemes with small singular locus. Consider the category $\Bsch_\smal$ as follows. Objects of $\Bsch_\smal$ are triples $(X,B,Z)$, where $(X,B)$ is a noetherian qe $B$-scheme of characteristic zero and $Z\into X$ is a Cartier divisor which is a disjoint union of varieties and contains $(X,B)_\ssing$. In particular, $X$ is generically reduced. Morphisms $(X',B',Z')\to(X,B,Z)$ in $\Bsch_\smal$ are strict regular morphisms, i.e. regular morphisms $f\:X'\to X$ such that $B'=f^*(B)$ and $Z'=f^*(Z)$.

\begin{theor}\label{locth}
There exists a strong desingularization functor $\calF_\smal$ on $\Bsch_\smal$ which assigns to a triple $(X,B,Z)$ a desingularization of $(X,B)$ and is compatible with all morphisms from $\Bsch_\smal$.
\end{theor}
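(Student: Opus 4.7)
The plan is to mirror the structure of \cite[Th.~3.3.1]{nonemb}: pass from $(X,B,Z)$ to its formal completion along $Z$, desingularize using $\hatcalF_\BVar$, and algebraize the resulting formal blow up sequence back to $X$. The new subtlety compared with the boundaryless case of \emph{loc.\ cit.} is that for the formal completion $(\hat X,\hat B,\hat Z)$ to lie in $\hatBVar_\small$ one needs each component of $\hat B$ to be $\hat Z$-supported; this is not automatic, so we prepend a separation step modelled on Step~2 of the proof of Theorem \ref{bvarth}, designed to move the old boundary away from the bad locus sitting inside $Z$.

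First I would construct a $Z$-supported, strong blow up sequence $h:(X_1,B_1)\longto(X,B)$ such that $|h^!(B)|$ is disjoint from the new bad locus $Z_1\subseteq h^{-1}(Z)$, by iterating over the components $B_1,\ldots,B_n$. The argument from Substeps (c)--(e) of Theorem \ref{bvarth} applies here with the only change that the inductive appeal to lower-dimensional desingularization is replaced by a direct appeal to $\calF_\BVar$: since $(X,B)_\ssing\subseteq Z$ and $Z$ is a disjoint union of varieties, the reduction of each $B_i\cap Z$ is a $B$-subvariety of $Z$ and can be desingularized by $\calF_\BVar$ applied with the restricted boundary. Pushing the resulting blow up sequence forward via the closed immersion into $X$, followed by the chart manipulation of Lemma \ref{chartlem} to complete Substep~(e), yields disjointness of $B_i^!$ from the new bad locus; iterating over $i$ accomplishes the separation. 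After this step the components of the new boundary $B_1$ that meet $Z_1$ are exclusively exceptional, hence $Z_1$-supported, and $X_1$ is regular outside $Z_1$ because the centers were all chosen inside $Z$.

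Next I would form the formal completion $(\hat X_1,\hat B_1,\hat Z_1)$ along $Z_1$; by the preceding paragraph it is an object of $\hatBVar_\small$. Apply $\hatcalF_\BVar$ to obtain a formal blow up sequence whose centers, by strongness, are regular and contained in the formal bad locus, hence $\hat Z_1$-supported and defined by open ideals. The algebraization mechanism of \cite[\S3.2]{nonemb}, extended to $B$-schemes in the proof of Theorem \ref{formdesth}, then converts this into a blow up sequence on $X_1$; composing with $h$ defines $\calF_\small(X,B,Z)$, which is strong because both of its pieces are. The main obstacle I anticipate is verifying functoriality under an exact regular morphism $g:(Y,C,T)\to(X,B,Z)$: while the separation step, the formal completion, and $\hatcalF_\BVar$ are each separately functorial, the interaction must be tracked carefully because the auxiliary subschemes $(B_i\cap Z)_\red$ and their $\calF_\BVar$-resolutions, the Lemma \ref{chartlem} step, and the algebraization all need to commute with $g^*$; exactness of $g$ ensures that $B$, $Z$, and these auxiliary subschemes pull back canonically, which should reduce the verification to combining the three ingredient functorialities.
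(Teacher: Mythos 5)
Your overall architecture (separate the old boundary from the bad locus, complete along $Z$, apply $\hatcalF_\BVar$, algebraize) matches the paper's, but your separation step has a genuine gap. In the paper the object that must be desingularized at that stage is the boundary component itself --- after first making $X$ regular, one sets $\tilX=B_{1,\out}$ and blows up centers lying on the strict transforms of $\tilX$ so as to make $(\tilX',\tilB')$ regular; only then do Lemma \ref{chartlem} and the snc-ness of $\tilB'$ yield that a whole neighborhood of $\tilX'$ in $X'$ is semi-regular. Since $X$ is a general qe scheme, so is $\tilX$, and $\calF_\BVar$ does not apply to it; what does apply is $\calF_\small$ itself, because $\dim(Z|_{\tilX})=d-1$. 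This is exactly why the paper runs an induction on $\dim(Z)$, and that induction cannot be bypassed the way you propose. Desingularizing the variety $(B_i\cap Z)_\red$ by $\calF_\BVar$ and pushing forward resolves the wrong object: the resulting centers are chosen to regularize $(B_i\cap Z)_\red$, not $B_{i}$, so the strict transform of $B_i$ can perfectly well stay singular (e.g.\ a hypersurface in a regular $X$ whose singular locus lies inside $Z$), and the restricted boundary on it need not become snc; the analogue of Substeps (c)--(f) of Theorem \ref{bvarth} then does not go through.

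Two secondary points. First, even after a correct separation, the completion of $(X',B')$ along $Z'$ is still not an object of $\hatBVar_\small$: the old outer boundary components, though disjoint from the bad locus, can still meet $Z'$ at good points and hence are not $Z'$-supported. The paper handles this by splitting $B=B_\inn+B_\out$ (which presupposes first desingularizing $X$ --- a step you omit) and feeding only $(X,B_\inn,Z)$ to the formal machine, which is legitimate precisely because $|B_\out|$ has been moved away from the bad locus; your claim that after separation the boundary components meeting $Z_1$ are ``exclusively exceptional'' is unjustified and false in general. Second, your $Z_1$ must be the Cartier divisor $h^*(Z)$ (not the bad locus, which is merely a closed set) for the completion to make sense in $\hatBVar_\small$. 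Your remarks on algebraization and on assembling functoriality from the three ingredients are consistent with the paper's Case 1.
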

\begin{proof}
The functor $\hatcalF_\BVar$ from Theorem~\ref{formdesth} will be the main ingredient of our construction. In fact, any other strong desingularization functor on $\hatBVar_\smal$ would do the job too. Let $T=(X,B)_\ssing$ denote the bad locus. Also, when possible we decompose the boundary $B=\{H_1\.H_n\}$ as a sum of inner and outer boundaries $B^\inn$ and $B^\out$, where $H_i=H_i^\inn+H_i^\out$ for each $H_i\in B$, and the irreducible components of $|H_i^\inn|$ are exactly the $Z$-supported irreducible components of $H_i$. If exists, this decomposition is unique, and it always exists when $X$ is regular (or just locally factorial).

Case 1. {\it Empty outer boundary.} Let $\Bsch_0$ denote the full subcategory of $\Bsch_\smal$ whose objects have empty outer boundary (i.e. $|B|\subseteq Z$). We claim that $\hatcalF_\BVar$ induces a desingularization functor on $\Bsch_0$. Indeed, the formal completion of $(X,B,Z)$ along $Z$ is an object of $\hatBVar_\smal$, and hence it is resolved by the functorial $\hatZ$-supported $B$-blow up sequence $\hatcalF_\BVar(\hatX,\hatB,\hatZ)$. Similarly to the proof of \cite[Theorem~3.4.1]{nemb} this sequence algebraizes to a functorial $Z$-supported $B$-blow up sequence $\calF_0(X,B,Z)\:(X',B')\longto(X,B)$ which provides a strong desingularization of $(X,B)$. In particular, compatibility with all regular morphisms follows from \cite[Corollary~2.4.5]{nemb}.

Case 2. {\it Regular outer boundary.} Let $\Bsch_1$ denote the full subcategory of $\Bsch_\smal$ formed by the triples $(X,B,Z)$ for which the decomposition $B=B^\inn+B^\out$ is defined and $|B^\out|$ is disjoint from $T$. We claim that the functor $\calF_0$ from Case 1 trivially extends to a desingularization functor $\calF_1$ on $\Bsch_1$. Indeed, if $(X,B,Z)$ is an object of $\Bsch_1$ then $(X,B^\inn)_\ssing\subseteq T$ is disjoint from $|B^\out|$ and it follows that $\calF_0(X,B^\inn,Z)$ resolves $(X,B)$: it obviously resolves $(X,B)$ over $X\setminus|B^\out|$ and it does not change anything near $|B^\out|$. Thus, we simply set $\calF_1(X,B,Z)=\calF_0(X,B^\inn,Z)$.

Case 3. {\it The general case.} Now, we are going to construct desingularization $\calF_\smal(X,B,Z)$ of a general object of $\Bsch_\smal$. We will use induction on the dimension of $Z$, so assume that $\dim(Z)=d$ and the functor is already constructed for the smaller values of $d$. Our construction is similar to the construction of the functor $\calF_\BVar$ in the proof of Theorem \ref{bvarth}, though we will have to separate $B^\out$ from the bad locus of the whole $B$, and this will require more work. Again we will proceed in 3 steps: 0) reduce to the case when $X$ is regular and $B$ is divisorial, 1) separate $B^\out$ from the bad locus, 2) resolve $B$.

Step 0. {\it We can assume that $X$ is regular and each $H_i$ is a Cartier divisor. In particular, $(X,B)$ is regular outside of $Z$.} Note that $\calF_0(X,\emptyset,Z)$ induces a desingularization $f\:X'\longto X$ of $X$. (In fact, this is $\calF_\Var(X,Z)$ constructed in \cite[Theorem~3.1.5]{nemb}.) Set $B'=f^\circ(B)$ and $Z'=f^*(Z)$. Then $B'$ decomposes as $B^\div+B^\comp$, where $B^\div$ is divisorial and any component of $B^\comp$ is a component of $X$. Since any desingularization of $(X',B^\div)$ is also a desingularization of $(X',B')$ we can safely replace $(X,B,Z)$ by $(X',B^\div,Z')$ accomplishing the step. In particular, the decomposition $B=B^\inn+B^\out$ is now defined.

Now, we will define $\calF(X,B,Z)$ as a composition of several $T$-supported $B$-blow up sequences with regular centers. To ease the notation, we will consider the underlying blow up sequences of schemes and the intermediate sequences will be denoted $f\:X'\longto X$. In addition, we set $B'=f^\circ(B)$ and $Z'=f^*(Z)$. By the bad loci we mean the closed sets $T$ and $T'=(X',B')_\ssing$.

Step 1. {\it We can achieve that $(X',B',Z')$ is in $\Bsch_1$, thus separating the outer boundary from the bad locus.} We start with the following observation.

Claim (1). {\it It is enough for a component $H_l$ of $B$ to construct a blow up sequence with regular $T$-supported centers $g\:X'\longto X$ such that $H'^\out_l$ is disjoint from $T'$.} Indeed, given such a $g$ we can apply the same argument to $(X',g^\circ(B),g^*(Z))$ to find a $T'$-supported blow up sequence $X''\longto X$ which separates another outer boundary component from the bad locus, etc.

{\it General plan of constructing $g$.} We will only blow up regular $T$-supported centers lying on the strict transforms of $\tilX:=H_l^\out$. So, we will always have that $g$ is the pushforward of the blow up sequence $\tilg\:\tilX'\longto\tilX$, where $\tilX'=g^!(\tilX)$. Also, we will use the following notation: $\tilZ=Z|_\tilX$, $\tilC=C|_\tilX$ and $\tilC'=\tilg^\circ(\tilC)$, where $C=\{H_1\.H_{l-1},H_l^\inn,H_{l+1}\.H_n\}$.

Substep (a). {\it We can achieve that the $B$-scheme $(\tilX',\tilC')$ is regular.} Since $(X,B)$ is regular outside of $Z$, it follows that $(\tilX,\tilC)$ is regular outside of $\tilZ$. In particular, $(\tilX,\tilC,\tilZ)$ is an object of $\Bsch_\smal$. Furthermore, $Z$ does not contain irreducible components of $\tilX$, hence $\dim(\tilZ)=d-1$ and the desingularization $\calF_\smal(\tilX,\tilC,\tilZ)\:(\tilX',\tilC')\longto(\tilX,\tilC)$ is defined by the induction assumption. We define $g\:X'\longto X$ to be the pushforward of $\tilg\:\tilX'\longto\tilX$ with respect to the closed immersion $\tilX\into X$.

Substep (b). {\it We can achieve in addition that $\tilX'=H'^\out_l$ and $\tilX'$ is a component of $H'_l=g^\rhd(H_l)$.} This will be achieved by several blow ups  that do not modify $\tilX'$. In particular, the condition of substep (a) will be preserved. Since $X'$ is regular and $\tilX'$ is its regular subscheme of codimension one, $\tilX'$ is a divisor. Clearly $\tilX'\into H'_l$ and hence $H_l'=\tilX'+Y'$, where $Y'$ is a $Z$-supported divisor. Note also that the irreducible components of $\tilX'=g^!(H_l^\out)$ are not $Z$-supported. In particular, $\tilY':=Y'|_{\tilX'}$ is a divisor on $\tilX'$, and if it is empty then $H'_l$ splits as $\tilX'\coprod Y'$ and this splitting coincides with $H'^\out_l\coprod H'^\inn_l$.

It remains to achieve the situation when $\tilY'=\emptyset$. Note that $$|\tilY'|\subseteq |\tilX'|\cap\left(|E_g|\cup|f^\rhd(H_l^\inn\right)|)\subseteq|E_\tilg|\cup|\tilC'|=|\tilC'|$$ and $\tilC'$ is snc by substep (a). Let $D_1\.D_k$ be the components of $\tilC'$. Choose the minimal numbers $m_1\. m_k$ such that $\tilY'\into\sum_{i=1}^km_iD_i$. We will kill $\tilY'$ by several blow ups along $D_i$'s and their pullbacks.

Take the minimal $j$ with a non-zero $m_j$ and consider the blow ups $h\:X''=\Bl_{D_j}(X')\to X'$ and $X'''=\Bl_{E''}(X'')\to X'$, where $E''=h^*(D_j)$ is the exceptional divisor of $h$. Note that both blow ups have regular centers and are $D_j$-supported, and hence $Z$-supported. When restricted to $\tilX'$, both centers coincide with $D_j$, hence $\tilX'''\toisom\tilX'$. We claim that extending $g$ to the composition $X'''\longto X$ one achieves that the new $\tilY'$ is contained in $(\sum_{i=1}^km_iD_i)-D_j$. This follows from a straightforward computation with charts, that we postpone to Lemma \ref{chartlem} after the proof of the theorem. Repeating the same double blow up operation ($\sum_i m_i)$-times, we achieve that $\tilY'$ vanishes.

Claim (2). {\it The conditions from substeps (a) and (b) imply that $\tilX'$ is disjoint from the bad locus $T'$. In particular, this completes step 1.} Recall that $X'$ is a regular scheme, $\tilX'$ is a regular divisor by substep (a), and $g^\circ(C)|_{\tilX'}=\tilC'$ by Lemma~\ref{reslem}. Hence Lemma~\ref{locuslem} implies that $\{\tilX'\}\coprod g^\circ(C)$ is an snc boundary in a neighborhood $U$ of $\tilX'$. By substep (b), shrinking $U$ we can also achieve that $H'_l\cap U=\tilX'$. Let $F$ be obtained from $C$ by removing $H_l^\inn$. Then, ignoring the order, $g^\circ(B)=\{H'_l\}\coprod g^\circ(F)$. So, $g^\circ(B)|_U$ is contained in the snc boundary $\{\tilX'\}\coprod g^\circ(C)|_U$, and hence $g^\circ(B)|_U$ is itself snc. Thus, $T'$ lies in the complement of $U$ and is disjoint from $\tilX'=H'^\out_l$.

Step 2. {\it Resolution of $B$.} Let $\calF_1(X',B',Z')\:X''\longto X'$ be as defined in Case 2 above. The composition $g\:X''\longto X$ is a desired desingularization of $(X,B)$ that functorially depends only on the triple $(X,B,Z)$, so we set $\calF_\smal(X,B,Z)=g$.
\end{proof}

Finally, let us prove the lemma we used in substep (b) above.

\begin{lem}\label{chartlem}
Assume that $X$ is a regular scheme with a regular divisor $T$, and $Y$ is a divisor in $X$ such that $Y|_T\into \sum_{i=1}^n m_iD_i$, where $\{D_1\. D_n\}$ is an snc boundary on $T$. Let $f\:X''\to X'\to X$ be obtained by first blowing up $D_1$ and then blowing up $D'_1=D_1\times_XX'$. Then $T''=f^!(T)$ is isomorphic to $T$ and the restriction of $f^\rhd(T+Y)-T''$ onto $T''$ embeds into $m'_1D_1+\sum_{i=2}^n m_iD_i$, where $m'_1=\max(0,m_1-1)$.
\end{lem}
\begin{proof}
Since the centers are regular, the principal transform is compatible with localizations by Remark~\ref{transformrem}(ii). So, it is enough to check the claim \'etale-locally on $X$, and hence we can assume that $X=\bfA^{d+1}=\Spec(k[x,y])$, $T=V(x)$ and $D_i=V(x,y_i)$ for $y=(y_1\. y_d)$ and $1\le i\le d$ (it is harmless to add components $D_j$ that do not appear in $Y|_T$ and set $m_j=0$). Then $Y=V(\phi)$, where $\phi=y_1^{l_1}\dots y_d^{l_d}+xP(x,y)$ and $l_i\le m_i$. The blow up $g\:X'\to X$ along $D_1$ is covered by the $x$-chart and the $y_1$-chart. The first one is disjoint from $g^!(T)$ and hence is not relevant. The new coordinates in the $y_1$-chart are $(x'=\frac{x}{y_1},y_1\. y_d)$, hence the pullback of $\phi$ equals to $\phi'=y_1^{l_1}\dots y_d^{l_d}+y_1x'P(y_1x',y)$. Note also that $g^*(T+Y)=V(y_1x'\phi')$ and hence $Z:=g^\rhd(T+Y)$ coincides with $V(x'\phi')$.

The second blow up $h\:X''\to X'$ of $f$ is along $E_g$, hence $h$ is an isomorphism and the transform is computed as follows: if $l_1=0$ then $D'_1=V(y_1)\nsubseteq Z$ and so $h^\rhd(Z)=h^*(Z)=Z$; if $l_1>0$ then $D'_1\subseteq Z$ and so $h^\rhd(Z)=Z-D'_1$. In the first case $f^\rhd(T+Y)=Z$ is defined by $x'\phi'$, and in the second case, $f^\rhd(T+Y)=T-D_1$ is defined by the element $x'\phi'/y_1=x'y_1^{l_1-1}y_2^{l_2}\dots y_d^{l_d}+x'^2P(y_1x',y)$. Since $T''=V(x')$, the lemma follows.
\end{proof}

\begin{rem}
If $\hatcalF_\BVar$ is independent of $\gtZ$ (see Remark~\ref{gtzrem}) then $\calF_\smal(X,B,Z)$ is independent of $Z$ too.
\end{rem}

\begin{rem}\label{formrem}
Using Trushin's strengthening of Elkik's theorem, see \cite[Theorem~51]{Trushin} one can also construct a $B$-strong desingularization in Theorem~\ref{locth}. This will not be used and the argument is rather complicated, so we only mention the main line. The naive idea is to pass to a formal completion along $Z$ and algebraize the obtained rig-regular formal variety $\gtX$ with its rig-snc boundary $\gtB$. The bottleneck here is that Trushin's theorem only allows to algebraize $\gtX$ with a single rig-regular closed formal subscheme $\gtT$ rather than the entire $\gtB$. Naturally, one should take $\gtT$ to be the non-empty $\gtB(n)$ with the maximal possible $n$. Then, using algebraization, one can separate $\gtB(n)$ from the bad locus of $(\gtX,\gtB)$, and proceed by induction on $n$.
\end{rem}

\subsection{General $B$-schemes}\label{genbsec}

\subsubsection{Unresolved locus}
Similarly to \cite[\S4.1.1]{nemb}, when working on strong (resp. $B$-strong) desingularization of $B$-schemes by the {\em unresolved locus} $f_\sing$ of a $B$-blow up sequence $f\:(X',B')\longto(X,B)$ we mean the smallest closed subset $T\subseteq X$ such that $f$ is a strong (resp. $B$-strong) desingularization over $X\setminus T$. Also, we say that $f$ is a {\em desingularization up to codimension $d$} if $f_\sing\subseteq X^{>d}$. Recall that $X^{\le d}$ denotes that set of points of $X$ of codimension at most $d$, and $X^{>d}=X\setminus X^{\le d}$.

\subsubsection{Equicodimensional blow up sequences and filtration by codimension}
A $B$-blow up sequence $(X',B')\longto(X,B)$ is {\em equicodimensional} if the blow up sequence $X'\longto X$ is equicodimensional in the sense of \cite[\S4.1.3]{nemb}. For completeness, we note that there is a straightforward generalization of \cite[Lemma~4.1.3]{nemb} to desingularization of $B$-schemes, which we leave to the reader. We will only use the obvious (and weaker) observation that if $\{\calF^{\le d}\}_{d\in\bfN}$ is a compatible family of functorial equicodimensional desingularizations up to codimension $d$ then this family possesses a limit $\calF$, which is a desingularization functor.

\subsubsection{Construction of $\calF$}
Let $\Bsch$ denote the category of all qe noetherian $B$-schemes of characteristic zero with all strict regular morphisms, and let $\Bsch_\red$ be the full subcategory of generically reduced $B$-schemes.

\begin{theor}\label{calbth}
There exists a strong desingularization functor $\calF_\red$ on $\Bsch_\red$.
\end{theor}
\begin{rem}
The functor $\calF_\smal$ from Theorem~\ref{locth} will be the main ingredient in the proof, and any other strong desingularization functor on $\Bsch_\smal$ can be used instead. Moreover, we will show that if one starts with a $B$-strong desingularization functor on $\Bsch_\smal$ then one can construct a $B$-strong desingularization functor $\calF_\red$. This only requires minor modification in the construction that will be explicitly mentioned.
\end{rem}
\begin{proof}
For shortness, we will write $\calF$ instead of $\calF_\red$. Let $(X,B)$ be a $B$-scheme from $\Bsch_\red$. We will construct a desingularization $\calF(X,B)$ and it will be clear that all stages of the construction are functorial. Actually, we will build a compatible sequence of functors $\calF^{\le d}$ which provide an equicodimensional desingularization up to codimension $d$ and such that if a center $C_i\subset X_i$ of $\calF^{\le d}(X,B)$ is such that its image in $X$ is of codimension $d$ then $C_i$ sits over $T_{d-1}:=\calF^{\le d-1}(X,B)_\sing$. The construction will be done inductively, and we define $\calF^{\le 0}$ to be the empty $B$-blow up sequence. This works fine since a generically reduced $B$-scheme is semi-regular at all its maximal points.

In the sequel, we assume that the functors $\calF^{\le 0}\.\calF^{\le d-1}$ are already constructed, and our aim is to construct $\calF^{\le d}(X,B)$. The required $B$-blow up sequence will be obtained by modifying the $B$-blow up sequence $\calF^{\le d-1}(X,B)$. Let $n$ be the length of this sequence. To simplify notation and avoid double indexes, after each modification we will denote the obtained blow up sequence as $f\:(X_m,B_m)\longto(X_0,B_0)=(X,B)$. In particular, we start with $f=\calF^{\le d-1}(X,B)$ and $m=n$, and we will change $f$ and $m$ in the sequel. By our assumption, $T_{d-1}$ is a closed subset of $X^{\ge d}$, hence it has finitely many points of codimension $d$. Let $T$ denote the set of these points and let $\oT$ be the Zariski closure of $T$.

Step 1. {\it Construction of $\calF^{\le d}(X,B)$ in the particular case when $T$ is closed.} Note that in this case $\calF^{\le d-1}(X,B)$ is a desingularization over $X\setminus T$ and hence $\calF^{\le d}(X,B)$ will be a desingularization of the whole $X$. We will use the operation of inserting a blow up sequence into another blow up sequence, which is defined in \cite[Definition~4.2.2]{nemb}.

{\it Extension 0.} Provide $T$ with the reduced scheme structure and extend $f$ by inserting $\Bl_T(X)\to X$ as the first blow up. As an output we obtain a blow up sequence $\calF^{\le d}_0(X,B)\:X'_m\longto X'_0\to X_0=X$ of length $m+1$ where the first center (the inserted one) is regular. Set $f=\calF^{\le d}_0(X,B)$ and increase $m$ by one after this step. As an output we achieve that $T\times_XX'_i$ is a Cartier divisor in $X'_i$ for $i>0$.

{\it Extensions $1\. n$.} The last $n$ centers of $\calF^{\le d}_0(X,B)$ do not have to be suitable for a strong (resp. $B$-strong) desingularization. So, we will use $n$ successive extensions to make the centers regular (reps. $B$-permissible) in the case of strong (resp. $B$-strong) desingularization.

Let us describe the $i$-th extension with $1\le i\le n$. It obtains as an input a blow up sequence $f=\calF^{\le d}_{i-1}(X,B)$ in which only the last $n-i$ centers can be non-permissible (resp. non-regular) and outputs a blow up sequence $\calF^{\le d}_i(X,B)$ with only $n-i-1$ bad blow ups in the end. By our assumption, $(X_{i+1},B_{i+1})\to(X_i,B_i)$ is the first blow up of $f$ whose center $W$ can be non-permissible (resp. non-regular). The latter happens if and only if $T_W:=(W,B_i|_W)_\ssing$ (resp. $T_W:=W_\sing$) is not empty. Obviously, the bad locus $T_W$ sits over $T$ and hence $T_W\subseteq W\cap(X_i)^{\le d}\subseteq W^{\le d-1}$. In particular, $\calF^{\le d-1}(W,B_i|_W)$ is a strong (resp. $B$-strong) desingularization that we denote $h\:(W',\tilB')\longto(W,B_i|_W)$. Clearly, $h$ is $T_W$-supported and hence $T$-supported. By \cite[Lemma 4.2.1]{nemb}, the pushforward $H\:X'_i\longto X_i$ of $h$ with respect to the closed immersion $W\into X_i$ is a blow up sequence with the same centers. Moreover, $H$ is $B_i$-permissible in the $B$-strong case by Lemma \ref{permisslem}.

Let now $f'\:X'_m\longto X'_i\longto X_i\longto X_0$ be obtained from $f$ by inserting $H\:X'_i\longto X_i$ instead of $X_{i+1}\to X_i$. By \cite[Lemma 4.2.3]{nemb} the center of $X'_{i+1}\to X'_i$ is the strict transform of $W$, hence it is $W'$. Since $(W',\tilB')$ is semi-regular (resp. $W'$ is regular) and $\tilB'=B'_i|_{W'}$, only the last $i-1$ blow ups of $f'$ can have non-permissible (resp. non-regular) centers. So, we can set $\calF^{\le d}_i(X,B)=f'$ and replace the old $f$ with $f'$.

{\it Extension $n+1$.} At this stage we already have a blow up sequence $f=\calF^{\le d}_n(X,B)$ such that all its centers are as required. The last problem we have to resolve is that $(X_m,B_m)_\ssing$ does not have to be empty. However, we at least know that the bad locus is $T$-supported and hence is contained in the Cartier divisor $D=T\times_XX_m$, which is a disjoint union of varieties. So, the triple $(X_m,B_m,D)$ is an object of $\Bsch_\smal$ and hence $(X_m,B_m)$ possesses a strong (resp. $B$-strong) desingularization $\calF_\smal(X_m,B_m,D)$.

Step 2. {\it Construction of $\calF^{\le d}(X,B)$ in general.} Set $X_T=\coprod_{x\in T}\Spec(\calO_{X,x})$ and $B_T=p^*(B)$, where $p\:X_T\to X$ is the projection, and note that $f_T=\calF^{\le d}(X_T,B_T)$ was defined in Step 1. For each $x\in T$ let $f_x$ denote the restriction of $f_T$ onto $X_x=\Spec(\calO_{X,x})$ with all trivial blow ups inherited from $f_T$. By functoriality, $f_x$ is a trivial extension of $\calF^{\le d}(X_x,B|_{X_x})$.

Choose an open neighborhood $U\into X$ of $X^{\le d}$ such that the closures $\ox\in U$ of distinct points $x\in T$ are pairwise disjoint, and define $g_x\:U_x\longto U$ as the pushforward of $f_x$ with respect to the pro-open immersion $X_x\into U$. Since each $g_x$ is $\ox$-supported, \cite[Lemma 4.2.4]{nemb} implies that we can merge all $g_x$'s into a single blow up sequence $U'\longto U$. Let $f\:X'\longto X$ be the pushforward of $g$ with respect to the open immersion $U\into X$. It follows that $f$ is obtained from $\calF^{\le d-1}(X,B)$ by inserting several equicodimensional $\oT$-supported blow ups. In particular, $f$ is a trivial extension of $\calF^{\le d-1}(X,B)$ over $X\setminus\oT$, and $f$ coincides with $f_x$ over $X_x$ for each $x\in T$. Thus, $f$ desingularizes $(X,B)$ over $X^{\le d}$ and we set $\calF^{\le d}(X,B)=f$.

The functoriality of the construction is established by checking that all intermediate steps are functorial. This is straightforward and done in the same way as in the proof of \cite[1.2.1]{nemb}, so we omit the details.
\end{proof}

\begin{cor}\label{calbcor}
There exists a strong desingularization $\calF$ on $\Bsch$.
\end{cor}
\begin{proof}
We will use the strong desingularization functor $\calF_\red$ from Theorem~\ref{calbth}. Let $(X,B)$ be a $B$-scheme from $\Bsch$. Consider the reduction $\tilX$ of $X$ and let $X'\longto X$ be the pushforward of $\calF_\red(\tilX,\emptyset)\:\tilX'\longto\tilX$. This gives rise to a $B$-blow up sequence $(X',B')\longto(X,B)$ with regular centers. Since $\tilX'$ is the reduction of $X'$, we can further blow up all components of $\tilX'$ which underly generically non-reduced components of $X'$. This gives rise to a $B$-blow up $(X'',B'')\to(X',B')$ with a generically reduced source. Finally, if $(X''',B''')\longto(X'',B'')$ is the blow up sequence $\calF_\red(X'',B'')$, then the composition $(X''',B''')\longto(X,B)$ is a required strong desingularization $\calF(X,B)$ of $(X,B)$. Clearly, all three steps are functorial.
\end{proof}

\subsection{Proof of the main results}\label{proofsec}

\subsubsection{Theorems \ref{Bth}}
After unrolling the definitions, this theorem is nothing else but Corollary~\ref{calbth}.

\subsubsection{Theorem \ref{embth}}\label{proofsubsec}
To construct $\calF_\emb(X,E,Z)$ we will compose two $T$-supported blow up sequences with regular centers. We will denote the intermediate sequence $f\:X'\longto X$ and we set, in addition, $Z'=f^!(Z)$ and $E'=f^\circ(E)$.

Step 1. {\em We can achieve that $Z'$ is regular and has simple normal crossings with $E'$. } Consider the $B$-blow up sequence $\tilf\:(Z',B')\longto(Z,B)$, where $B=E|_Z$ and $\tilf=\calF(Z,B)$ with $\calF$ as in Theorem~\ref{Bth}, and let $f\:(X',E')\longto(X,E)$ be its pushforward. In particular, the centers of $f$ are as required, and $B'=E'|_{Z'}$ by Lemma \ref{reslem}. Thus, $E'_{Z'}$ is semi-regular and hence $Z'$ has simple normal crossings with $E'$.

Step 2. {\em Making the boundary snc.} The boundary $E'$ does not have to be snc since $\tilf$ and $f$ do not have to be permissible. However, $Z'$ has simple normal crossings with $E'$, hence the bad locus of $(X',E')$ is disjoint from $Z'$. Thus, $\calF(X',E')\:(X'',E'')\longto(X',E')$ does not modify a neighborhood of $Z'$. Note that $E''$ is snc because $(X'',E'')$ is semi-regular and $E''$ is divisorial. Finally, $\calF(X',E')$ is $T$-supported because $E'$ is snc at any point $x'\in X'$ such that $f$ is a local isomorphism at $x'$. So, the composition $g\:(X'',E'')\longto(X,E)$ satisfies all assertions of the theorem and we set $\calF_\emb(X,E,Z)=g$.

\subsubsection{Theorem \ref{compacth}}
First, let us fix an $S$-compactification $X$ of $Y$: by Nagata compactification theorem there exists a reduced proper $S$-scheme $X$ with a dense subscheme $S$-isomorphic to $Y$. We identify $Y$ with that subscheme and set $E=X\setminus Y$. Since $\calF_\princ(X,E)\:X'\to X$ does not modify $Y$ we can replace $X$ by $X'$ achieving that $X$ is regular and $E$ is an snc divisor. Next, consider the snc divisor $D=Y\setminus Y_0$ and let $D_1\.D_n$ be its irreducible components. By $\oD$ and $\oD_1\.\oD_n$ we denote their Zariski closure in $X$. It suffices to find a modification $f\:\oY\to X$ such that $f$ is an isomorphism over $Y$ and the preimage of $E\cup\oD$ is an snc divisor. We will construct $f$ as a composition of $n$ blow up sequences. At first step we apply $f_1=\calF_\emb(X,E,\oD_1)$, which only modifies $E$ because $D_1$ is regular. To simplify notation, replace $X,E$ and all $\oD_i$ with their preimages under $f_1$. In this way we achieve that $E_1:=E\cup\oD_1$ becomes snc. Next, apply $f_2=\calF(X,E_1,\oD_2)$. Since $D_1$ has simple normal crossings with $D_2$, the modification locus of $f_2$ is contained in $E$. Renaming $X,E,\oD_i$ as earlier we achieve that $E_2:=E\cup\oD_1\cup\oD_2$ is snc, etc. Obviously, after $n$ iterations we achieve that $X\setminus Y_0=E\cup\oD$ is snc. As we explained above, the blow ups do not modify $Y$, so we can set $\oY=X$.

\begin{rem}
There are two choices in our construction: the initial choice of the compactification, and the choice of the order of the components of $D$.
\end{rem}

\subsubsection{Other theorems}
We saw in the Introduction that Theorem \ref{divth} follows from Theorem \ref{Bth}, see Remark \ref{brem}(i). Also, we deduced Theorem \ref{princth} from Theorem \ref{embth}, and explained how Theorem \ref{catth} is proved. It, thus, remains to establish the semistable reduction theorem.

\section{Semistable reduction}\label{semisec}

\subsection{Reduction to $d$-regularization}\label{dregsec}

\subsubsection{Special morphisms}
We start with recalling some terminology on formal sche\-mes. We say that a morphism $\gtf\:\gtX\to\gtS$ of formal schemes is of {\em topologically finite type} if it is quasi-compact and locally of the form $$\Spf(A[[x_1\.x_m]]\{x_{n+1}\.x_n\}/I)\to\Spf(A).$$ Following Berkovich, we will also call such morphisms {\em special}. Note that a special morphism $\gtf$ is adic if and only if one can take $m=0$, and this happens if and only if $\gtf$ is of finite type.

\subsubsection{Distinguished formal schemes}
Let $R$ be a complete DVR with a uniformizer $\pi$ and residue field $k$ and let $\gtS=\Spf(R)$. For a formal $R$-scheme $\gtX$ we call $\gtZ=\gtX\otimes_Rk$ the {\em special fiber} of $\gtX$. It is a divisor containing the closed fiber $\gtX_s$.

Given $0\le m\le n$, let $R_{n,m}$ denote the completion of $R\{t_0,t_1\.t_n\}$ along the principal ideal $(t_0\dots t_m)$. We say that $\gtX$ is {\em strictly distinguished} (resp. {\em distinguished}) if locally (resp. \'etale-locally) on $\gtX$ one can factor the structure morphism into a composition $$\gtX\stackrel\phi\to\Spf\left(R_{n,m}/(t_0^{l_0}\dots t_n^{l_n}-a)\right)\to\gtS$$ such that $\phi$ is \'etale, $0\neq a\in \pi R$, and $l_i>0$ for $0\le i\le m$. If one can also achieve that $l_i\le 1$ for $0\le i\le n$ then $\gtX$ is called {\em strictly semistable} (resp. {\em semistable}).

\begin{rem}
(i) We require that $n\ge 0$ and $l_0\.l_m>0$ in order to ensure that $\gtX_s$ is the support of a divisor: $\gtX_s=V(t_0\dots t_m)$. In particular, $t_0\dots t_m$ automatically divides $\pi$. Sometimes it is convenient to view $t_0$ as a dummy variable. For example, one can represent $R\{t_1\.t_n\}$ as $R_{n,0}/(t_0-a)$ with $0\neq a\in \pi R$.

(ii) Berkovich considered the case when $\gtX$ is distinguished and one can take $a$ to be a uniformizer. Note that this happens if and only if $\gtX$ is distinguished and regular. Such formal schemes are called $\pi$-distinguished in \cite[Definition~2.1.1]{berform}.

(iii) Using desingularization of formal schemes, Berkovich proved in \cite[Theorem~2.1.2]{berform} that after modifying $\gtX$ and extending $R$, any reduced formal $R$-scheme $\gtX$ can be made (a) $\pi$-distinguished, (b) semistable. Our version of semi\-stable reduction asserts that both conditions can be achieved simultaneously.
\end{rem}

\begin{lem}\label{distlem}
Assume that $R$ is a complete discrete valuation ring with residue field $k$ of characteristic zero and $\gtX$ is a regular special formal $R$-scheme. Then $\gtX$ is strictly distinguished if and only if the special fiber $\gtZ=\gtX\otimes_R k$ is an snc divisor and the closed fiber $\gtX_s$ is a divisor.
\end{lem}
\begin{proof}
In fact, this is a particular case of \cite[Proposition~2.1.3]{berform}, where an analogous description of $\pi$-distinguished formal schemes is given. For completeness, we outline the argument.

The direct implication is clear, so assume that $\gtZ$ is snc and $\gtX_s$ is a divisor, and let us prove that $\gtX$ is strictly distinguished. We can work locally at a closed point $\gtx\in\gtX_s$. By our assumption there exists a regular family of parameters $t_0\.t_n\in\calO_{\gtX,\gtx}$ such that $\gtZ=V(\pi)$ is given by the vanishing of $t_0\dots t_l$ for $0\le l\le n$. Shrinking $\gtX$ we can assume that $t_i$ are global functions, and multiplying $t_0$ by a unit we can assume that $\pi=t_0\dots t_l$. Since $\gtX_s$ is a subdivisor of $\gtZ$, we can also assume that $\gtX_s$ is given by the vanishing of $t_0\dots t_m$ for $0\le m\le l$. Thus, we obtain a morphism $$\phi\:\gtX\to\gtY=\Spf\left(R_{n,m}/(t_0\dots t_l-\pi)\right),$$ and it suffices to prove that $\phi$ is \'etale at $x$.

Since $t_0\dots t_m$ generates ideals of definition of $\gtX$ and $\gtY$, the special morphism $\phi$ is of finite type. Thus, it suffices to prove that $h\:\calO_{\gtY,\gty}\to\calO_{\gtX,\gtx}$ is flat and unramified, where $\gty=\phi(\gtx)$. Since $m_\gtx$ and $m_\gty$ are generated by $t_0\.t_n$, the fiber of $h$ is the finite field extension $k(\gtx)/k$. Using that $k$ is of characteristic zero, we obtain that $\phi$ is unramified at $x$. To show that $h$ is flat, it suffices to show that its completion $\hath\:\hatcalO_{\gtY,\gty}\to\hatcalO_{\gtX,\gtx}$ is flat. The homomorphisms $\bfQ\into k\into k(\gtx)$ are formally smooth, hence we can find compatible fields of coefficients $k\into\hatcalO_{\gtY,\gty}$ and $k(\gtx)\into\hatcalO_{\gtX,\gtx}$. Since both complete rings are regular with parameters $t_0\.t_n$, we then have $k\llbracket t_0\.t_n\rrbracket\toisom\hatcalO_{\gtY,\gty}$ and $k(\gtx)\llbracket t_0\.t_n\rrbracket\toisom\hatcalO_{\gtX,\gtx}$. In particular, $\hath$ is a base change of $k\into k(\gtx)$, so $\hath$ is flat.
\end{proof}

\subsubsection{$d$-regularization}\label{redsec}
Given a reduced qe formal scheme $\gtX$ with a global function $f\in\Gamma(\calO_\gtX)$ and a number $d>0$, let $\gtX[f^{1/d}]^\nor$ denote the normalization of $\Spf_\gtX(\calO_\gtX\{f^{1/d}\})$. It follows from Lemma~\ref{distlem} that Theorem~\ref{stabth} is equivalent to the following: if $\gtX$ is a reduced special formal $R$-scheme then there exists a $(\gtX_\sing\cup\gtZ)$-supported blow up $\gtX'\to\gtX$ and $d>0$ such that $\gtX'[\pi^{1/d}]^\nor$ is regular and has an snc special fiber. The latter statement makes sense for arbitrary qe formal schemes, and we will prove it in that generality. Moreover, we will work with invertible ideals instead of global functions.

Assume that $\gtX$ is a reduced formal scheme, $\gtZ$ is a closed formal subscheme and $d>0$ is a number invertible on $\gtX$. We say that the pair $(\gtX,\gtZ)$ is {\em $d$-regular} if there exists an open covering $\gtX=\cup_i\gtX_i$ such that each $\gtZ_i=\gtZ\times_\gtX\gtX_i$ is principal, say $\gtZ_i=V(f_i)$, the formal schemes $\gtX'_i=\gtX_i[f_i^{1/d}]^\nor$ are regular, and the formal subschemes $\gtZ'_i=V(f_i^{1/d})$ are snc divisors. The definition is independent of the choice of generators $f_i$ because $d$ is invertible on $\gtX$. Thus, Theorem~\ref{stabth} will be proven once we establish the following $d$-regularization result:

\begin{theor}\label{dregth}
Assume that $\gtX$ is a reduced qe formal scheme of characteristic zero and $\gtZ$ is a closed formal subscheme. Then there exists a number $d>0$ and a blow up $\gtX'\to\gtX$ with center supported on $\gtX_\sing\cup\gtZ$ and such that the pair $(\gtX',\gtZ\times_\gtX\gtX')$ is $d$-regular.
\end{theor}
\begin{proof}
Recall that $\gtT$-supported blow ups are preserved by compositions. Hence resolving the pair $(\gtX,\gtZ)$ by Theorem~\ref{catth} we can assume that $\gtX$ is regular and $\gtZ$ is strictly monomial. In this special case, the theorem will be proved by combinatorial methods later. In fact, we will deal with a slightly more general situation in Theorem~\ref{formmonprth} below.
\end{proof}

\subsection{$d$-regularization for cone fans}
Combinatorial $d$-regularization can be formulated in few equivalent languages, and we start with the language of \cite{KKMS}.

\subsubsection{Cone fans}
The classical combinatorial objects appearing in toroidal geometry are rational polyhedral cone complexes $\Sigma$ with integral structures $L$, i.e. $\Sigma$ is glued from finitely many rational polyhedral cones $\{\sigma\subset N_\sigma=\bfR^{n_\sigma}\}_{\sigma\in F}$ along face maps and each $N_\sigma$ is provided with a lattice $L_\sigma\subset\bfQ^{n_\sigma}$ in a way compatible with the face maps. For shortness, we call $\Sigma=(\Sigma,L)$ a {\em cone fan}.

\subsubsection{Projective subdivisions}
To simplify notation, if $\Sigma'\to\Sigma$ is a subdivision then we view functions $\Sigma\to\bfR$ also as functions on $\Sigma'$. Let $\lam\:\Sigma\to\bfR$ be a convex integral pl function, in the sense that its restrictions $\lam_\sigma$ onto the cones are convex integral pl functions. Then there exists a minimal subdivision $\Sigma_\lam\to\Sigma$ such that $\lam$ is a linear function on (the simplices of) $\Sigma_\lam$. Such a subdivision is called {\em projective} in \cite[Definition III.1.5]{KKMS}.

\subsubsection{Root covers}
Let $f\:\Sigma\to\bfR$ be an integral linear function in the sense that its restrictions $f_\sigma$ onto the cones are integral and linear. Given a number $d>0$ we define a new fan $\Sigma'=\Sigma[d^{-1}f]$ by changing the integral structure of $\Sigma$ as follows: $L'_\sigma=L_\sigma[d^{-1}f]$ is the lattice of points $l\in L_\sigma$ such that $d|f(l)$. In other words, we make the minimal change of the integral structure such that $d^{-1}f$ becomes an integral function.

\subsubsection{Regularity}
We say that $\Sigma$ is {\em regular} if each $L_\sigma$ possesses a distinguished basis $e_1\.e_{n_\sigma}$ such that $\sigma$ is the standard cone $\prod_{i=1}^{n_\sigma}\bfR_{\ge 0}e_i$. An integral linear function $f$ on a regular $\Sigma$ is called {\em snc} if for each $\sigma$ with the distinguished basis $e_1\.e_n$ one has that $f_\sigma(e_i)\in\{0,1\}$ for $1\le i\le n$. Finally, we say that the pair $(\Sigma,f)$ is $d$-regular if $\Sigma[d^{-1}f]$ is regular and $d^{-1}f$ is snc on $\Sigma[d^{-1}f]$. Now we can reformulate the main combinatorial result of \cite{KKMS} in a way resembling the semistable reduction theorem.

\begin{theor}\label{confanth}
Assume that $\Sigma$ is a cone fan and $f\:\Sigma\to\bfR$ is a convex integral pl function. Then there exists a projective subdivision $\Sigma'\to\Sigma$ and $d>0$ such that $f$ is integral on $\Sigma'$ and $(\Sigma',f)$ is $d$-regular.
\end{theor}
\begin{proof}
Note that the class of projective subdivisions is preserved by compositions. So, replacing $\Sigma$ by $\Sigma_f$ we can assume that $f$ is integral. Furthermore, by using \cite[Theorem~I.10]{KKMS} we can further refine $\Sigma$ so that it becomes regular. By $P$ we denote the subset of $\Sigma$ given by $f=1$.

Let $\tilSigma$ be the cone subfan of $\Sigma$ consisting of cones $\tilsigma\subset\Sigma$ such that the set $P\cap\tilsigma$ is bounded; this happens if and only if $f$ does not vanish on the edges of $\tilsigma$. We first solve the problem for $\tilSigma$ and $f|_\tilSigma$. Note that $\tilP=\tilSigma\cap P$ is a polytope complex and $\tilL=L\cap\tilP$ is an integral structure on $\tilP$. By \cite[Theorem~III.4.1]{KKMS} there exists $d>0$ and a projective subdivision $\tilP'$ of $\tilP$ whose polytopes are $d^{-1}\tilL$-integral simplices of minimal possible volume. The subdivision $\tilP'\to\tilP$ induces a projective subdivision $\tilSigma'=\tilSigma_\tilh\to\tilSigma$ of the cone fans, where to each polytope $p\in\tilP'$ corresponds a cone $\sigma_p$ in $\tilSigma'$. It is easy to see that the condition that $p$ is a $d^{-1}\tilL$-integral simplex of minimal volume is equivalent to the condition that $\sigma_p$ is regular with respect to the integral structure of $\tilSigma'[d^{-1}f]$, see \cite[p. 106-107]{KKMS}. So, $\tilSigma'$ is regular. In addition, if $e_1\.e_n$ is the distinguished basis of $\sigma_p$ then $f(e_i)>0$ by the definition of $\tilSigma$, and $f(e_i)\in d\bfZ$ by the definition of $L[d^{-1}f]$. By the minimality of the volume, we then have $f(e_i)=d$.

We extend the subdivision $\tilSigma_\tilh\to\tilSigma$ to a subdivision $\Sigma'=\Sigma_h\to\Sigma$ as follows. Any cone $\sigma$ in $\Sigma$ splits into the product of the standard cones $\tilsigma\times\osigma$, where $\tilsigma$ is in $\tilSigma$ and $f=0$ on $\osigma$. Pulling back $\tilh_\tilsigma$ to a function $h_\sigma$ for each $\sigma$, we obtain a convex integral pl function $h$ on $\Sigma$, and it is easy to see that the preimage of $\sigma$ in $\Sigma'$ is the product of $\osigma$ with the preimage of $\tilsigma$ in $\tilSigma'$. Thus, each cone of $\Sigma'$ splits into the product of a cone of $\tilSigma'$ and a cone $\osigma$. Also, any edge of $\Sigma'$ lies either in $\tilSigma'$ or in $\Sigma\setminus\tilSigma$, and hence its basic integral vector satisfies $f(e)=d$ or $f(e)=0$, accordingly. Since $(\tilSigma',f|_{\tilSigma'})$ is $d$-regular we obtain that $(\Sigma',f)$ is also $d$-regular.
\end{proof}

\subsection{Translation to fans}

\subsubsection{Kato's fans}
A more economical way to represent a cone fan $\Sigma$ is by use of the associated Kato's fan consisting of the set $F=F_\Sigma$ of cones of $\Sigma$ with the sheaf of monoids $\calM_F$ such that $\calM_{F,x}=L_x\cap\sigma_x$. We will simply call $F=(F,\calM_F)$ a {\em fan}. In this paper we only consider saturated fans, so the word saturated will often be omitted.

We refer to \cite[\S3.1]{ILO} for the definition of the category of saturated fans; it is easy to see that it is equivalent to the category of cone fans. In addition, for a fixed field $k$ with $S=\Spec(k)$, both categories are equivalent to the category of toric $k$-varieties with toric morphisms. As in \cite[\S3.2]{ILO}, we will denote the equivalences $\Sigma\mapsto S[\Sigma]$ and $F\mapsto S[F]$.

\subsubsection{Invertible ideals}
An ideal $I\subseteq\calM_F$ is called invertible if each $I_x$ is generated by an element $f_x\in\calM_{F,x}$. Since $\calM_{F,x}$ is a sharp monoid, $f_x$ is uniquely determined and this implies that these elements glue to a global section $f\in\Gamma(I)$. In particular, $I=(f)$ is principle. Note that principle ideals correspond to non-negative integral functions on cone fans.

\subsubsection{Blow ups}\label{subdsec}
The blow up of a fan $F$ along an ideal $I\subseteq\calM_F$ is naturally defined using charts, e.g. see \cite[\S3.2]{ILO}. We also recall that ideals in $\calM_F$ correspond to toric ideals on $S[F]$ bijectively and this functor takes blow ups of $F$ to toric blow ups of $S[F]$, see \cite[Lemma~3.2.17]{ILO}. By \cite[Chapter~I]{KKMS}, for any projective subdivision $\Sigma'\to\Sigma$, the corresponding morphism of toric varieties $S[\Sigma']\to S[\Sigma]$ is a toric blow up, and so the corresponding morphism of fans $F'\to F$ is also a blow up. We do not explore the question whether the ideal defining the blow up can be chosen canonically.

\subsubsection{$d$-regularization for fans}
We say that a fan $F$ is {\em regular} if each stalk $\calM_{F,x}$ is a free monoid $\bfN^{n_x}$. Assume $F$ is regular, then we call an ideal $I\subseteq\calM_F$ {\em snc} if each $I_x$ is generated by an element of the form $\sum_{i=1}^n a_ie_i$, where $e_1\. e_n$ form the basis of $\calM_{F,x}$ and $a_i\in\{0,1\}$. Assume that $I$ is invertible. Then $I=(f)$ and we obtain a canonical morphism $F\to\Spec(\bfN)$ taking the generator of $\bfN$ to $f$. Let $F[d^{-1}f]^\sat$ be the saturated base change of $F$ with respect to the map $\Spec(\bfN)\to\Spec(\bfN)$ corresponding to the homomorphism $d\:\bfN\to\bfN$. This construction corresponds to the construction of $\Sigma[d^{-1}f]$ in the language of cone fans, and it makes the analogy with the semistable reduction even more direct. We say that a pair $(F,I)$ is {\em $d$-regular} if $I$ is invertible, $F[d^{-1}I]^\sat$ is regular and $d^{-1}I$ is snc on $F[d^{-1}I]^\sat$. Theorem~\ref{confanth} can now be reformulated as follows:

\begin{theor}\label{fanth}
For any fan $F$ with an ideal $I\subseteq\calM_F$ whose stalks are non-empty there exists a blow up $h\:F'\to F$ and a number $d>0$ such that the pair $(F',h^{-1}I)$ is $d$-regular.
\end{theor}

\subsection{$d$-regularization for log regular log schemes}\label{uniflogsec}

\subsubsection{Log regular log schemes}
We refer to \cite[Section 2]{Kato-toric} for the definition of log regular log schemes. If $X$ is a log regular log scheme then the set $D=X(0)$ of all points $x\in X$ with $\ocalM_x=1$ is a divisor and $\calM_X$ is determined by $D$ via $\calM_X=\calO_X\cap i_*(\calO_U^\times)$, where $U=X\setminus D$ and $i\:U\into X$. In particular, one can represent log regular log schemes by pairs $(X,D)$, and the typical example is when $X$ is regular and $D$ is an snc divisor. Furthermore, if $(X,D)$ is log regular then the following conditions are equivalent: (a) $X$ is regular, (b) $X$ is regular and $D$ is snc, (c) all stalks $\ocalM_x$ are free monoids.

\subsubsection{Monoidal ideals}
An ideal $\calI$ on a log scheme $(X,D)$ is called {\em monoidal} if it is of the form $I\calO_X$ for an ideal $I\subseteq\calM_X$. In this case, we say that $\Spec(\calO_X/\calI)$ is a monoidal subscheme of $X$. We refer to \cite[Section 4]{Niziol} for the definition of the log blow up along a monoidal ideal $\calI$. When $(X,D)$ is log regular, it reduces to the normalized blow up along $\calI$ on the level of schemes, see \cite[Proposition~4.3]{Niziol}.

\subsubsection{Fans of log regular schemes}
To any log regular log scheme $(X,D)$ one associates a fan $F=F(X,\calM_X)$ as follows: $F=\cup_{n\in\bfN} F_n$ is the set of generic points of the log strata $X(n)$ of $(X,D)$ (i.e. the strata where the rank of $\ocalM_X$ equals to $n$) and $\calM_F=\ocalM_X|_F$. Monoidal ideals of $(X,D)$ correspond bijectively to ideals of the fan, and log blow ups of $(X,D)$ correspond to blow ups of fans: $F({\rm LogBl_\calI}(X))=\Bl_I(F)$, where $I\subseteq\calM_F$ corresponds to $\calI$, see \cite[Theorem~4.7]{Niziol}. Naturally, we say that $(F,I)$ is the fan of the pair $((X,D),\calI)$. Theorem~\ref{fanth} then implies the following analogue:

\begin{theor}\label{monprth}
Assume that $(X,D)$ is a log regular log scheme and $\calI$ is a monoidal ideal. Then there exists a log blow up $f\:(X',D')\to(X,D)$ and a number $d>0$ such that the fan of the pair $((X',D'),f^{-1}\calI)$ is $d$-regular.
\end{theor}

\begin{rem}
If $d$ is invertible on $X$ then the fan of a pair $((X,D),\calI)$ is $d$-regular if and only if the pair $(X,\calI)$ is $d$-regular. Hence, Theorem~\ref{monprth} implies the scheme-theoretic analogue of Theorem~\ref{dregth}.
\end{rem}

\subsection{The case of formal log schemes}
It remains to extend the above theory from schemes to formal schemes. The notions of \S\ref{uniflogsec} are compatible with regular morphisms and hence can be extended to other geometric categories. For concreteness, we work out the case of formal schemes, which is used to prove Theorem~\ref{dregth}.

Let $(\gtX,\calM_\gtX)$ be a qe formal log scheme. If $\gtX=\Spf(A)$ is affine then $\Gamma(\calM_\gtX)\to A$ gives rise to an affine log scheme $X=(X,\calM_X)$ with $X=\Spec(A)$, and we say that $(\gtX,\calM_\gtX)$ is log regular if $(X,\calM_X)$ is. Since $\gtX$ is qe, formal localizations are regular morphisms and it follows that affine formal subschemes of $(\gtX,\calM_\gtX)$ are log regular too. In particular, the notion of log regular formal log schemes globalizes.

In the same fashion, one defines the log blow up $\gtX'={\rm LogBl}_\calI(\gtX)$ along a monoidal ideal: choose an appropriate affine covering of $\gtX$ by $\gtX_i=\Spf(A_i)$ and glue the formal completions of the log blow ups of $X_i=\Spec(A_i)$.

The only place where one should exercise some care is with extending the notion of fans. One cannot define log strata $\gtX(n)$ by gluing formal subschemes of $\gtX_i$ corresponding to $X_i(n)$, but, fortunately, it suffices to use the closures $X(\ge\! n)$ of $X(n)$ instead. Clearly $X_i(\ge\! n)$ are closed subschemes compatible with the morphisms $\Spec(A_{ijk})\to X_i$, where $\gtX_i\cap\gtX_j=\cup_k\Spf(A_{ijk})$ is an open covering. Therefore, the corresponding closed formal subschemes $\gtX_i(\ge\! n)$ of $\gtX_i$ glue to a closed formal subscheme $\gtX(\ge\! n)$ of $\gtX$, thus defining the locus of $\gtX$ where $\rk(\ocalM_\gtX)\ge\! n$. We define the fan of $\gtX$ by $F=\cup_{n\in\bfN}F_n$, where $F_n$ is the set of irreducible components of $\gtX(\ge\! n)$ which are not contained in $\gtX(\ge\! n+1)$. Now, all basic results about log blow ups, including compatibility with blow ups of fans, can be extended to formal schemes in the obvious way, and the proofs reduce to choosing an affine covering and using the analogous results for schemes. In particular, we obtain the following extension of Theorem~\ref{monprth}.

\begin{theor}\label{formmonprth}
Assume that $(\gtX,\gtD)$ is a log regular formal log scheme and $\gtI$ is a monoidal ideal. Then there exists a log blow up $f\:(\gtX',\gtD')\to(\gtX,\gtD)$ and a number $d>0$ such that the fan of the pair $((\gtX',\gtD'),f^{-1}\gtI)$ is $d$-regular. In particular, if $\gtX$ is of characteristic zero then the pair $((\gtX',\gtD'),f^{-1}\gtI)$ is $d$-regular.
\end{theor}

As we saw earlier, Theorem~\ref{formmonprth} implies Theorem~\ref{dregth}, which in its turn implies Theorem~\ref{stabth}.

\appendix

\section{Motivation for introducing $B$-schemes}\label{bmotsec}
In this appendix we advertise $B$-schemes and try to explain why their definition in desingularization context is natural. Also, we explain the logic of other definitions, for example, of the complete transform.

\subsection{Embedded desingularization and the boundary}\label{embsec}
At least since the great work \cite{Hironaka} of Hironaka, it is a common knowledge that in the embedded desingularization one should give a special treatment to the exceptional divisor accumulated through the blow up sequence. It is also common to call this divisor the boundary, and, indeed, in some aspects it behaves like a boundary. Thus, for the sake of mastering an inductive desingularization procedure one should consider triples $(X,E,Z)$ even if one starts with an empty $E$.

Also, it is now a standard observation that although the support of $E$ is an snc divisor, at least at some stages of the algorithm one should provide $E$ with the finer structure of splitting to regular components and ordering them (see also \S\ref{strsec}). Very naturally, both tasks are accomplished by the history of the desingularization process, and, after adding the history function, $E$ becomes an snc boundary in our sense. In principle, there are finer versions of the algorithm that involve less history data, but in this paper we choose to work with the total history and defined the $B$-schemes accordingly. Note also that the rule of forming the new boundary from the old one is encoded in the complete transform.

\subsection{Non-embedded desingularization and the boundary}\label{nonsec}
A common approach to building a non-embedded desingularization of a scheme $Z$ is to embed it into a regular ambient scheme $X$ and to apply embedded desingularization to $(X,\emptyset,Z)$. For example, this gives a non-strong desingularization of equidimensional varieties of characteristic zero in many works, including \cite{Jarek} and \cite{Kollar-resolution}. As an output one gets a permissible blow up sequence $f\:X'\longto X$ such that $\tilf\:Z'=f^!(Z)\longto Z$ is a desingularization and $Z'$ has simple normal crossings with the boundary $E'$ of $f$. In particular, in addition to getting a regular $Z'$ one automatically obtains that the exceptional divisor $E'|_{Z'}$ of $\tilf$ is snc. Moreover, if we start with an arbitrary initial snc boundary $E$ such that $D:=E|_Z$ is a divisor and resolve $(X,E,Z)$ by $f\:X'\longto X$, then its restriction $\tilf\:Z'\longto Z$ also makes $D\times_ZZ'$ a strictly monomial divisor whose reduction has simple normal crossings with the exceptional divisor.

To summarize, the embedded desingularization makes more than just desingularizing the embedded scheme $Z$; it desingularizes the embedded pair $(Z,E|_Z)$, and this is non-trivial even when $E|_Z=\emptyset$. This observation indicates that even for non-embedded desingularization one implicitly deals with boundaries, and the natural problem one solves, even without planning to, is to desingularize a pair $(Z,B)$ where $B=E|_Z$ is a boundary. If one works with a total history then $E$ is a set of divisors, and hence $B$ should at least be a set of divisors rather than a single divisor. Moreover, it is natural to restrict $E$ onto closed subschemes without excluding non-divisorial cases, and this directly leads to our definition of boundaries.

\subsection{$B$-permissibility}
As we discussed in \S\ref{embsec}, one usually uses permissible blow up sequences of an ambient $B$-scheme $(X,E)$. When constructing a strong embedded resolution of $Z\into X$ one only blows up centers on the strict transform of $Z$, e.g. see \cite[12.2]{BMcanonical}. In this case the blowing up sequence $f\:X'\longto X$ is determined by the blow up sequence $g\:Z'\longto Z$ of strict transforms. Namely, $f$ is the pushforward of $g$. It is then natural to encode all data we define on $X$ in terms of  a data defined on $Z$. This naturally leads to the definitions of boundaries and $B$-permissible blow ups. In particular, Lemma \ref{permisslem} says that $f$ is $E$-permissible if and only if $g$ is $E|_Z$-permissible.

\subsection{$B$-schemes}
In order to discuss desingularization of pairs $(Z,B)$ it looks natural to link them into a single object, and the fact that such a pair can be interpreted as a log scheme of a special type gives a strong indication that this definition makes sense. We will discuss below two situations where the use of $B$-schemes seems to be very natural.

\begin{rem}
It is an interesting question whether more general log schemes can be useful for desingularization theory.
\end{rem}

\subsection{Redundancy of blow ups}
There are two possibilities to work with blow ups. In \cite{temdes}, by a blow up one means a morphism $X'\to X$ that is isomorphic to a blow up along some center, while in \cite{nemb} and in this paper the center of a blow up is a part of the data, and hence blow ups are enriched morphisms. The latter is crucial in order to have strict and principal transforms. There are also various examples (obvious and not) of different blow up sequences that produce the same morphism but induce different transforms, see \cite[3.33]{Kollar-resolution}. However, if we consider a blow up of schemes $f\:X'\to X$ as a $B$-blow up of $B$-schemes $(X',E')\to(X,\emptyset)$, then the center is almost always determined by the $B$-blow up, and similarly for the blow up sequences from \cite{Kollar-resolution}. The only small redundancy with $B$-blow ups was described in Remark \ref{blowrem}(i), and even that could be avoided by using the history function for ordering, i.e. by ordering components by the natural numbers so that empty components are allowed.

\subsection{Transforms}
Principal (weak or controllable) transforms of closed subschemes or ideals are commonly used in embedded desingularization. The idea is to split off some multiples of the exceptional divisors from pullbacks of the ideal until nothing is left.

However, in general there is no morphism $(X',f^\rhd(B))\to(X,B)$, and this issue is resolved by introducing the complete transform of $B$, which is the minimal natural increment of $f^\rhd(B)$ such that there is a natural morphism of $B$-schemes $(X',f^\circ(B))\to(X,B)$. Note also that $|f^\circ(B)|=|f^!(|B|)|\cup|E_f|$ and so the complete transform keeps at least set-theoretical information about the old boundary and the boundary of $f$.

\subsection{On strict desingularization from \cite{temdes}}\label{strsec}
If $X$ is a qe scheme of characteristic zero and $B$ is a divisorial boundary on $X$ then by Theorem~\ref{Bth} there exists a $B$-blow up sequence $\calF(X,B)\:(X',B')\to(X,B)$ such that $X'$ is regular, $B'$ is snc and $\calF(X,B)$ only modifies the locus where $X$ is not regular or $B$ is not snc. A non-functorial version of this theorem was established in \cite{temdes} under the name of semi-strict desingularization. It was formulated in terms of the underlying divisors, i.e. it addressed $D=|B|$ rather than $B$. Moreover, one similarly defined in \cite{temdes} strict desingularization of a pair $(X,D)$ as a blow up sequence that resolves $X$, makes $D$ normal crossings and only modifies the locus where $X$ is not regular or $D$ is not normal crossings. It might look surprising, but strict desingularization does not exist even for algebraic surfaces, as one can see in the classical example of Whitney umbrella discussed in Example \ref{exam} below.

\begin{rem}\label{strrem}
(i) Existence of strict desingularization of varieties was incorrectly proved in \cite[Theorem~2.2.11]{temdes}. The mistake in that proof was in claim (i) and it is due to the fact that the number of formal brunches through a point of $D$ is not Zariski semi-continuous, unlike the number of irreducible components. This should be corrected by replacing strict desingularization and formal branches by semi-strict desingularization and irreducible components in the formulation and proof of \cite[Theorem~2.2.11]{temdes}. Actually this was in the original argument I heard from Bierstone-Milman! The correction does not affect anything else in \cite{temdes}.

(ii) Functorial semi-strict desingularization does not exist. Indeed, such desingularization would automatically be strict due to the fact that normal crossings and strict normal crossings loci are indistinguishable in the \'etale topology. The best one can hope for is a semi-strict desingularization that it is only functorial with respect to morphisms that preserve the number of irreducible components through any point. This is not an \'etale invariant and missing this subtlety lead me to the mistake in \cite{temdes}. The semi-strict desingularization obtained from $\calF$ is slightly weaker: it depends on an ordering of the components and is only functorial with respect to smooth morphisms that respect the order.

(iii) The above subtleties do not show up when one works with boundaries $D=\{D_1\. D_n\}$, snc loci of boundaries, and strict regular morphisms. For example, strict regular morphisms respect the $k$-multiple locus $D(k)$ that consist of points contained in exactly $k$ boundary components. Thus, the proof of \cite[2.2.11]{temdes} applies to the $B$-variety $(Z,D)$ and provides a strong functorial desingularization of $B$-varieties. This gives another proof of Theorem \ref{bvardesth}, which has the advantage of being independent of the order of the components. In particular, the latter proof also applies to unordered boundaries.
\end{rem}

\begin{exam}\label{exam}
Take the Whitney umbrella $D\subset Z=\bfA_k^3=\Spec(k[x,y,z])$ given by $x^2=zy^2$. It has the so-called pinching point singularity at the origin $O$. Clearly, $D$ is smooth outside of the $z$-axis $C$, is not monomial at $O$, and is normal crossings but not snc at any point of $C_0=C\setminus\{O\}$. We will show that any modification $D'\to D$ with normal crossings $D'$ has to resolve $D$ at the generic point of $C$. In particular, it cannot preserve the whole normal crossings locus of $D$.

A nice topological argument is given by J. Kollar at \cite{Kollar-semilog}. Here is a slightly different argument that applies over any field. An easy computation shows that the preimage of $C_0$ in the normalization of $D_0=D\setminus\{O\}$ is an irreducible \'etale double covering $\tilC_0$ of $C_0$ whose extension to a covering $\tilC\to C$ with a normal $\tilC$ ramifies over $O$. Assume now that $D'\to D$ is a modification inducing an isomorphism over $D_0$, and let $\tilD'\to D'$ be its normalizations. Clearly, $C_0$ embeds into $D'$, its Zariski closure $C'\subset D'$ is isomorphic to $C$, and the preimage of $C'$ in $\tilD'$ is an irreducible double covering that ramifies over $O':=C'\setminus C_0$. It follows easily that $D'$ is not normal crossings at $O'$.
\end{exam}

\subsection{Non-existence of normal crossings compactifications}
The same example with a pinching point shows that one cannot replace strict normal crossings with normal crossings in Theorem~\ref{compacth}.

\begin{exam}\label{compexam}
(i) Let $C,D,Z,D_0,C_0$ be as in Example~\ref{exam}. Then $D_0$ is a normal crossings surface such that any compactification $\oD$ of $D_0$ is not normal crossings, as can be shown by precisely the same argument. Indeed, the closure $\oC$ of $C_0$ in $\oD$ admits a morphism from $C$, so let $\oO\in\oC$ be the image of $O\in C$. Then the preimage of $\oC$ in the normalization of $\oD$ is a double covering which ramifies over $\oO$. Therefore, $\oD$ is not normal crossings at $\oO$.

(ii) Consider the smooth variety $Y=Z\setminus\{O\}$ with the normal crossings divisor $D_0$ and set $Y_0=Y\setminus D_0$. We claim that there is no regular compactification $\oY$ of $Y$ such that $\oY\setminus Y_0$ is a normal crossings divisor. Indeed, if such a compactification exists then the closure $\oD$ of $D_0$ in $\oY$ is a normal crossings divisor, contradicting part (i).
\end{exam}

\bibliographystyle{amsalpha}


\end{document}